\numberwithin{equation}{section}
\newtheorem{lem}[subsubsection]{Lemma}
\newtheorem{prop}[subsubsection]{Proposition}
\newtheorem{conj}[subsubsection]{Conjecture}
\newtheorem{thm}[subsubsection]{Theorem}
\theoremstyle{remark}
\newtheorem{rem}[subsubsection]{Remark}
\newtheorem{ex}[subsubsection]{Example}
\newcommand{\iso}{\buildrel{\sim}\over{\longrightarrow}}
\def\im{\mathop{\rm Im}\nolimits \,}
\def\id{{\rm id}}
\newcommand{\cA}{{\mathcal A}}
\newcommand{\cB}{{\mathcal B}}
\newcommand{\cC}{{\mathcal C}}
\newcommand{\cL}{{\mathcal L}}
\newcommand{\cO}{{\mathcal{O}}}
\newcommand{\fa}{{\mathfrak a}}
\newcommand{\epi}{\twoheadrightarrow}
\newcommand{\mono}{\hookrightarrow}
\newcommand{\BA}{{\mathbb{A}}}
\newcommand{\BG}{{\mathbb{G}}}
\newcommand{\BN}{{\mathbb{N}}}
\newcommand{\BZ}{{\mathbb{Z}}}
\DeclareMathOperator{\Fib}{{Fib}}
\DeclareMathOperator{\Ker}{{Ker}}
\DeclareMathOperator{\Lie}{{Lie}}
\DeclareMathOperator{\Spec}{{Spec}}
\DeclareMathOperator{\Mor}{{Mor}}
\DeclareMathOperator{\Maps}{{Maps}}
\DeclareMathOperator{\oMaps}{{\Maps^{\circ}}}
\DeclareMathOperator{\Sym}{{Sym}}
\newcommand{\cY}{\mathcal{Y}}
\begin{document}

\thanks{Partially supported by NSF grant DMS-1303100}

\title[Grinberg -- Kazhdan theorem and Newton groupoids]{The Grinberg -- Kazhdan formal arc theorem and the Newton groupoids}

\author{Vladimir Drinfeld}

\address{Dept. of Math., Univ. of Chicago, 5734 University 
Ave., Chicago, IL 60637}


\begin{abstract}
We first prove the Grinberg - Kazhdan formal arc theorem without any assumptions on the characteristic. This part of the article is equivalent to arXiv:math-AG/0203263.

Then we try to clarify the geometric ideas behind the proof by introducing the notion of Newton groupoid (which is related to Newton's method for finding roots).  Newton groupoids are certain groupoids in the category of schemes associated to any generically etale morphism from a locally complete intersection to a smooth variety.
\end{abstract}

\keywords{formal arcs, implicit function theorem, Newton method, smooth groupoids, Lie algebroids, algebraic stacks, affine blow-up}
\subjclass[2010]{14B05}

\maketitle

\section{Introduction}
\subsection{Subject of the article}

In \S\ref{s:GK} of this article (which is equivalent to the preprint \cite{Dr}) we formulate and prove the Grinberg -- Kazhdan theorem \cite{GK} without any assumptions on the characteristic of the field. The proof is based on the ideas that go back to the 17th century 
(namely, the implicit function theorem or equivalently, Newton's method for finding roots) and the 19th century (the Weierstrass division theorem).

The goal of the rest of the article is to clarify the geometric ideas behind the proof from \S\ref{s:GK}. In particular, we introduce \emph{Newton groupoids}.
These are certain groupoids in the category of schemes, which are related to Newton's method for finding roots.  
They are associated to any generically etale morphism from a locally complete intersection to a smooth variety (both schemes are assumed separated).
Let us note that Newton groupoids are used already in \S\ref{s:GK} behind the scenes.

The main message of \S\ref{s:Intro Newton groupoid}-\ref{s:Defining Newton} is that the notion of groupoid in the category of schemes is useful ``in real life" and that Lie algebroids provide intuition that helps us understand groupoids.
We also demonstrate that it is easy to construct smooth groupoids acting on a given variety in a generically transitive way. (This is in contrast with the situation for algebraic group actions).

\subsection{Structure of the article}
In \S\ref{s:GK} we formulate and prove the Grinberg -- Kazhdan theorem.
In \S\ref{s:rephrasing} we slightly rephrase the proof.

In  \S\ref{s:Intro Newton groupoid} we first recall basic facts about groupoids and stacks. Then we formulate the main properties of the Newton groupoids (see \S\ref{sss:thegoal}, \S\ref{sss:LieGamma_r} and \S\ref{ss:r and r+1}-\ref{ss:the group scheme}.).

In \S\ref{s:Defining Newton} we define the Newton groupoids and verify their properties. The key formula is \eqref{e:tilde x_i}.

\section{The Grinberg -- Kazhdan theorem}  \label{s:GK}
\subsection{Formulation of the theorem}
Let $X$ be a scheme of finite type over a field $k$, and 
$X^\circ \subset X$ the smooth part of $X$.  Consider the scheme $\cL(X)$
of formal arcs in $X$.  The $k$-points of $\cL(X)$ are just maps
${\rm Spec} \, k[[t]] \to X$. Let $\cL^\circ(X)$ be the
open subscheme of arcs whose image is not contained in
$X \setminus X^\circ$.  Fix an arc $\gamma_0:{\rm Spec}\, k[[t]]\to X$ in
$\cL^\circ(X)$, and let $\cL(X)_{\gamma_0}$ be the formal
neighborhood of $\gamma_0$ in $\cL(X)$. We will give a simple proof of
the following theorem, which was proved by M.~Grinberg and D.~Kazhdan
\cite{GK} for fields $k$ of characteristic $0$.

\begin{thm}\label{main}
Suppose that $\gamma_0(0)$ is not an isolated point\footnote{If $\gamma_0(0)$ is an isolated point of $X$ then $\gamma_0$ is an isolated point of $\cL^\circ(X)$, so $\cL(X)_{\gamma_0}=\Spec k$.} of $X$. Then there exists a scheme $Y = Y(\gamma_0)$ of finite type over $k$,
and a point $y \in Y(k)$, such that
$\cL(X)_{\gamma_0}$ is isomorphic to $D^{\infty}\times Y_y$
where $Y_y$ is the formal neighborhood of $y$ in $Y$ and 
$D^{\infty}$ is the product of countably many copies of the formal disk
$D:={\rm Spf}\, k[[t]]$. \end{thm}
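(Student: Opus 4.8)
The plan is to reduce the study of the formal arc space at $\gamma_0$ to a finite-dimensional problem by exploiting the fact that $\gamma_0$ meets the smooth locus $X^\circ$, so that $\gamma_0(t)$ lands in $X^\circ$ for the generic point of $\Spec k[[t]]$, i.e. after inverting $t$. Concretely, I would first pass to an affine neighborhood of $\gamma_0(0)$ and embed $X$ into affine space, realizing $X$ locally as the zero scheme of functions $F=(F_1,\dots,F_m)$ in variables $x=(x_1,\dots,x_n)$. An arc near $\gamma_0$ is then a collection of power series $x_i(t)\in k[[t]]$ satisfying $F(x(t))=0$, and the deformation-theoretic linearization is controlled by the Jacobian matrix $\partial F/\partial x$ evaluated along $\gamma_0$. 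The key input from $\gamma_0\in\cL^\circ(X)$ is that this Jacobian has a maximal minor that is a nonzero element of $k((t))$, so it vanishes to some finite order $N$ in $t$.

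The heart of the argument is a Newton/implicit-function step combined with Weierstrass division. The idea is that once the Jacobian is invertible over $k((t))$, we can split the infinitely many coordinate functions $x_i(t)$ into two groups: a ``pivot'' set of coordinates whose Newton corrections are determined (order by order in $t$) by the remaining ``free'' coordinates, and the free coordinates themselves. The free coordinates contribute the factor $D^\infty$ — each is an unconstrained power series, i.e. a copy of the formal disk in each Taylor coefficient — while the determined coordinates, after clearing denominators by the finite order $N$ of Jacobian vanishing, live in a finite-dimensional formal neighborhood. I would make this precise by showing that the completed local ring of $\cL(X)$ at $\gamma_0$ is a completed tensor product $k[[t_1,t_2,\dots]]\,\hat\otimes\,\hat{\cO}_{Y,y}$, where $Y$ is a finite-type scheme cut out by the finitely many equations that survive after the Newton elimination and Weierstrass division have absorbed everything of order $\ge N$. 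The Weierstrass division theorem is what guarantees that the infinite tail of each power series is governed by only finitely many ``leading'' coefficients modulo the relations, which is exactly what produces a \emph{finite-type} $Y$.

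The main obstacle I anticipate is making the infinite-dimensional bookkeeping rigorous and coordinate-independent. Newton's method gives a formal algorithm for solving $F(x(t))=0$ order by order, but turning this into a clean isomorphism of formal schemes requires controlling (i) the precise truncation level at which the finite-dimensional data $Y$ stabilizes, and (ii) the fact that the ``determined'' coordinates really do decouple as a product rather than merely a fibration. The order-$N$ vanishing of the Jacobian minor is the quantitative control here: it bounds how far the Newton corrections can ``see,'' so that all Taylor coefficients beyond a fixed degree are either free or rigidly determined. I expect the cleanest formulation to isolate a finite set of equations in a finite set of variables (giving $Y$) together with a change of variables — essentially a triangular, unipotent-type substitution invertible over $k[[t]]$ — that trivializes the remaining infinitely many variables into $D^\infty$. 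Verifying that this substitution is an isomorphism of formal neighborhoods, and that it works in arbitrary characteristic (so one must avoid dividing by integers and rely only on Newton's method plus Weierstrass division, never on characteristic-zero tricks), is where the real work lies.
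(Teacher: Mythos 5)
Your toolkit is the right one --- Weierstrass division plus a Newton/implicit--function elimination, a split into free power series (giving $D^\infty$) and finitely many constrained coefficients (giving $Y$), all characteristic-free --- and this is indeed the strategy of the paper. But as written the plan is missing the two ideas that make this strategy actually close up, and one of your steps would fail as stated. The first gap: you keep all $m$ defining equations $F_1,\dots,F_m$ and work with a ``maximal minor'' of the non-square Jacobian. Newton's correction term needs a \emph{square} matrix: it is $C^{-1}f$, made rigorous via the adjugate $\hat{C}$ and the identity $\hat{C}C=(\det C)\cdot\mathrm{Id}$. The paper therefore first replaces $X$ by a complete intersection $X'\supset X$ cut out by exactly $\mathrm{codim}\,X$ equations whose Jacobian minor is nonzero along the arc, and proves $\cL(X)_{\gamma_0}=\cL(X')_{\gamma_0}$. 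In your ``pivot equations'' language: after solving the pivot equations you must show the discarded equations are automatically satisfied by deformations over a test ring, and this is not formal --- nilpotents in the test ring a priori only give that the discarded $F_i$ vanish up to nilpotents along the deformed arc, and upgrading this to exact vanishing needs a correct choice of $X'$ together with the non-zero-divisor property of a Weierstrass factor. This step is absent from your plan.

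The second gap is more serious: your finite-dimensional bookkeeping is $t$-adic, organized around truncation ``at a fixed degree'' controlled by the fixed integer $N$. This fails uniformly in the test ring. Over $(A,m)$, the Jacobian determinant along a deformed arc factors by Weierstrass as $q(t)u(t)$ with $u$ invertible and $q$ monic of degree $N$ but congruent to $t^N$ only modulo $m$; the ideals $(q)$ and $(t^N)$ agree only up to powers of $m$, with exponents depending on the nilpotency order of $A$, which is unbounded as $A$ varies. Hence no fixed $t$-adic truncation level yields a finite-type functor whose points match deformations for \emph{all} test rings: the Newton iteration converges $q$-adically, not $t$-adically. The key idea of the paper --- stated there in italics --- is to \emph{make $q$ itself one of the unknowns}: $Y$ parametrizes triples $(q,\bar{x},\bar{y})$ with $q$ monic of degree $N$, $\bar{x}$ a truncation modulo $q^2$, $\bar{y}$ a truncation modulo $q$, subject to the congruences $\det B\equiv 0$ and $f(\bar{x},\bar{y})\equiv 0 \pmod{q}$ and $\hat{B}f(\bar{x},\bar{y})\equiv 0\pmod{q^2}$. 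Once both ideas are in place, the rest is exactly the bookkeeping you describe: writing $x(t)=q^2\xi(t)+\bar{x}$ with $\xi$ free produces $D^\infty$, and a Hensel-type lemma (proved by induction on the nilpotency exponent of $m$, hence valid in any characteristic) shows the remaining coordinates are uniquely determined. Without promoting $q$ to a coordinate of $Y$ and phrasing every truncation and equation $q$-adically, the ``finite set of equations in a finite set of variables'' you hope to isolate does not exist.
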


The proof will be given in \S\ref{ss:proofGK}; its key idea is formulated at the end of the second paragraph of \S\ref{ss:proofGK}.

\medskip

\noindent

\subsection{Convention} Throughout this paper, a {\it test-ring} $A$ is a
local commutative unital $k$-algebra with residue field $k$ whose
maximal ideal $m$ is nilpotent. If $S$ is a scheme over $k$ and 
$s\in S(k)$ is a $k$-point we think of the formal neighborhood $S_s$ in
terms of its functor of points $A \mapsto S_s (A)$ from test-rings to
sets. For instance, $A$-points of $\cL(X)_{\gamma_0}$ are $A[[t]]$-points
of $X$ whose reduction modulo $m$ equals $\gamma_0$.

\subsection{Warning} \label{ss:warning}
\begin{rem}
$\cL(X)_{\gamma_0}=\cL(X_1)_{\gamma_0}$ where $X_1\subset X$ is the
closure of the connected component of $X^\circ$ containing 
$\gamma_0({\rm Spec} \, k((t))\,)$. So we can assume that $X$ is reduced
and irreducible. But $Y$ is, in general, neither reduced nor irreducible
(e.g., see the following example).
\end{rem}

\begin{ex}    \label{ex:xy+something}
Let $X$ be the hypersurface $yx_{n+1}+g(x_1,\ldots,x_n)=0$, where $g$ is a polynomial vanishing at $0$.  Let $\gamma_0(t)$
be defined by $$x_{n+1}^0(t)=t,y^0(t)=x_1^0(t)=\ldots=x_n^0(t)=0.$$ Then
one can define $Y$ to be the hypersurface $g(x_1,\ldots,x_n)=0$ and $y$
to be the point~$0\in Y$. Indeed, by the Weierstrass division theorem
(a.k.a preparatory lemma) for any test-ring $A$ every $A$-deformation of 
$x_{n+1}^0(t)=t$ can be uniquely written as $$x_{n+1}(t)=(t-\alpha)u(t),$$ where $\alpha$
belongs to the maximal ideal $m\subset A$ and $u\in 1+m[[t]]$. 
Given $\alpha$, $u$, and $x_1(t)\ldots,x_n(t)\in m[[t]]$,
there is at most one $y(t)\in m[[t]]$ such that
$$y(t)x_{n+1}(t)+g(x_1(t),\ldots,x_n(t))=0,$$ and $y(t)$ exists if and
only if $g(x_1(\alpha ),\ldots,x_n(\alpha ))=0$. Writing $x_1(t),\ldots , x_n(t)$ as
\[
x_i(t)=\xi_i+(t-\alpha )\tilde x_i (t), \quad \xi_i\in m, \tilde x_i\in m[[t]],
\]
we see that the set of $A$-points of $\cL(X)_{\gamma_0}$ identifies with the set of collections
\begin{equation}   \label{e:collections}
(\alpha, u(t),\tilde x_1(t),\ldots \tilde x_n(t), \xi_1,\ldots \xi_n), 
\end{equation}
where $\xi_1,\ldots, \xi_n\in m$ satisfy the equation
\begin{equation}  \label{e:the-equation}
g(\xi_1,\ldots,\xi_n ))=0
\end{equation}
and $\alpha\in m$, $u\in 1+m[[t]]$, $\tilde x_i\in m[[t]]$ are ``free variables".
\end{ex}

\subsection{Proof of Theorem \ref{main}.} \label{ss:proofGK}
We can assume that $X$ is a closed subscheme of an affine space. Then
there is a closed subscheme $X'$ of the affine space such that
$X'\supset X$, $X'$ is a complete intersection, and the image of
our arc $\gamma_0$ is not contained in the closure of $X'\setminus X$.
Clearly $\cL(X)_{\gamma_0}=\cL(X')_{\gamma_0}$, so we can assume that
$X=X'$ is the subscheme of 
${\rm Spec}\, k[x_1,\ldots,x_n,y_1,\ldots,y_l]$ 
defined by equations $f_1=\ldots =f_l=0$ such that the arc $\gamma_0(t)
=(x^0(t),y^0(t))=(x^0_1(t),\ldots,x^0_n(t),y^0_1(t),\ldots,y^0_l(t))$ is
not contained in the subscheme of $X$ defined by 
$\det\frac{\partial f}{\partial y}=0$. Here
$\frac{\partial f}{\partial y}$ is the matrix of partial derivatives
$\frac{\partial f_i}{\partial y_j}$. 

Let $\gamma$ be an $A$-deformation
of $\gamma _0$ for some test-ring $A$, so $\gamma (t)=(x(t),y(t))$, where
$x(t)\in A[[t]]^n$, $y(t)\in A[[t]]^l$. Then by the Weierstrass division
theorem
$\det\frac{\partial f}{\partial y}(x(t),y(t))$ has a unique
representation as $q(t)u(t)$ where $u\in A[[t]]$ is invertible 
and $q$ is a monic polynomial whose reduction modulo the maximal ideal
$m\subset A$ is a power of $t$. Let $d$ denote the degree of $q$; it
depends only on $\gamma _0$, not on its deformation $\gamma$. We assume
that $d>0$ (otherwise we can eliminate~$y$). \emph{The idea of what follows is to consider $q$
as one of the unknowns.} 

More precisely, $A$-deformations of $\gamma_0$ 
are identified with solutions of the following system of equations. The
unknowns are $q(t)\in A[t]$, $x(t)\in A[[t]]^n$, and $y(t)\in A[[t]]^l$
such that $q$ is monic of degree $d$, $q(t)$ is congruent to $t^d$ modulo
$m$, and the reduction of $(x(t),y(t))$ modulo $m$ equals
$\gamma_0(t)=(x^0(t),y^0(t))$. The equations are as
follows:
\begin{equation}\label{1}
  \det\frac{\partial f}{\partial y}(x(t),y(t))\equiv 0 \mbox{ mod }q, 
\end{equation}
\begin{equation}\label{2}
   f(x(t),y(t))=0,
\end{equation}
where $f:=(f_1,\ldots,f_l)$. (Notice that if (\ref{1}) is satisfied then 
$q(t)^{-1}\det\frac{\partial f}{\partial y}(x(t),y(t))$ is
automatically invertible because it is invertible modulo $m$).

Now fix $r\ge 2$ and consider the following system of equations.
The unknowns are $q(t)\in A[t]$, $x(t)\in A[[t]]^n$, and 
$\bar y\in A[t]^l/(q^{r-1})$ such that $q$ is monic of degree $d$, $q(t)$ is
congruent to $t^d$ modulo $m$, the reduction of $x(t)$ modulo $m$ equals
$x^0(t)$, and the reduction of $\bar y$ modulo $m$ equals the reduction
of $y^0$ modulo $t^{r-1}$. The equations are as follows:
\begin{equation}\label{3}
\det\frac{\partial f}{\partial y}(x(t),\bar y)\equiv 0\mbox{ mod }q, 
\end{equation}
\begin{equation}\label{4}
f(x(t),\bar y)\in\im (q^{r-1}\frac{\partial f}{\partial y}(x(t),\bar y):
A[t]^l/qA[t]^l\to q^{r-1}A[t]^l/q^{r}A[t]^l).
\end{equation}
Condition (\ref{4}) makes sense because $f(x(t),\bar y)$ is well defined
modulo the image of $q^{r-1}\frac{\partial f}{\partial y}(x(t),\bar y)$. Notice that
(\ref{4})~is indeed an equation because it is equivalent to the condition
$\hat Cf(x(t),y(t))\equiv 0\mbox{ mod }q^{r}$, where $y(t)\in A[t]^l$ is
a preimage of $\bar y$ and $\hat C$ is the matrix adjugate to
$C:=\frac{\partial f}{\partial y}(x(t),y(t))$ 
(so $C\hat C=\hat CC=\det C$). This condition is equivalent to the
following equations, which do not involve a choice of $y(t)\in A[t]^l$
such that $y(t)\mapsto \bar y$:
\begin{equation}\label{5}
f(x(t),\bar y)\equiv 0\mbox{ mod }q^{r-1},
\end{equation}
\begin{equation}\label{6}
\hat B f(x(t) ,\bar y)\equiv 0\mbox{ mod }q^{r},
\end{equation}
where $B:=\frac{\partial f}{\partial y}(x(t),\bar y)$; notice that
(\ref{6}) makes sense as soon as (\ref{5}) holds.

\begin{lem}
The natural map from the set of solutions of (\ref{1}-\ref{2}) to the
set of solutions of (\ref{3}-\ref{4}) is bijective.
\end{lem}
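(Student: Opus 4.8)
The natural map sends a solution $(q,x(t),y(t))$ of \eqref{1}--\eqref{2} to $(q,x(t),\bar y)$, where $\bar y$ is the image of $y(t)$ in $A[t]^l/(q^{r-1})$; this is well defined since $q$ is monic, so that $A[[t]]/(q^{r-1})=A[t]/(q^{r-1})$. The plan is to establish bijectivity by showing that for every solution $(q,x,\bar y)$ of \eqref{3}--\eqref{4} there is a \emph{unique} $y(t)\in A[[t]]^l$ reducing to $\bar y$ modulo $q^{r-1}$ and satisfying \eqref{1}--\eqref{2}. First I would check that the map is well defined. Since $\bar y\equiv y\bmod q^{r-1}$ and $f(x,y)=0$, conditions \eqref{3} and \eqref{5} are immediate. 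For \eqref{6} I would expand $f(x,\bar y)$ to first order about $y$: taking the representative $y\bmod q^{r-1}$ one gets $f(x,\bar y)\equiv \frac{\partial f}{\partial y}(x,y)(\bar y-y)\bmod q^{2(r-1)}$, and multiplying by $\hat B$ and using $\hat B\equiv\widehat{\frac{\partial f}{\partial y}(x,y)}\bmod q^{r-1}$ together with the adjugate identity and \eqref{1} gives $\hat B f(x,\bar y)\equiv 0\bmod q^{r}$; throughout one uses $r\ge 2$, so that the quadratic error lies in the ideal $(q^{2(r-1)})\subseteq(q^{r})$. I expect this half to be routine.

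Injectivity is the clean half, and I would prove it as a uniqueness statement. Suppose $y,y'\in A[[t]]^l$ both satisfy \eqref{2} and reduce to the same $\bar y$, so that $\eta:=y-y'\equiv 0\bmod q^{r-1}$, hence $\eta\equiv 0\bmod q$. Writing $f(x,y)-f(x,y')=M\eta$ with $M$ the matrix of divided differences, one has $M\equiv\frac{\partial f}{\partial y}(x,y')\bmod q$, so by \eqref{1} $\det M=q\cdot(\mbox{unit})$. Multiplying $M\eta=0$ on the left by the adjugate $\hat M$ and using $\hat M M=\det M\cdot\Id$ gives $q\cdot(\mbox{unit})\cdot\eta=0$; since $q$ is monic it is a non-zero-divisor in $A[[t]]$, whence $\eta=0$. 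As the map also records $q$ and $x$, this yields injectivity.

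The substance is surjectivity: lifting a solution $(q,x,\bar y)$ of \eqref{3}--\eqref{4} to an honest solution of \eqref{1}--\eqref{2}. Here I would run Newton's method inside $A[[t]]$, using that \eqref{3} makes $\det\frac{\partial f}{\partial y}(x,\bar y)=q\cdot(\mbox{unit})$ (the unit property being verified modulo the nilpotent ideal $m$, where it reduces to the fact that $\det\frac{\partial f}{\partial y}$ vanishes to order exactly $d$ along $\gamma_0$). Starting from a representative $y_0$ of $\bar y$, \eqref{5} gives $f(x,y_0)\equiv 0\bmod q^{r-1}$, so the Newton step $y_{k+1}=y_k-\hat B_k f(x,y_k)/(q u_k)$, with $B_k=\frac{\partial f}{\partial y}(x,y_k)$ and $q u_k=\det B_k$, lands in $A[[t]]^l$ (because $f(x,y_k)$, hence $\hat B_k f(x,y_k)$, is divisible by $q$) and, using $B_k\hat B_k=\det B_k\cdot\Id$, satisfies $f(x,y_{k+1})=\big(\mbox{quadratic remainder in }y_{k+1}-y_k\big)$. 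The decisive role of \eqref{6} is that it forces the first correction $y_1-y_0$ to be divisible by $q^{r-1}$ (not merely $q^{r-2}$), so that $f(x,y_1)\equiv 0\bmod q^{2(r-1)}$ and the order of vanishing begins to grow. Passing to the $q$-adic limit produces $y\in A[[t]]^l$ with $f(x,y)=0$, and \eqref{1} holds for it since $q^{-1}\det\frac{\partial f}{\partial y}(x,y)$ is a unit (again checked modulo $m$).

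The main obstacle is exactly the bookkeeping that keeps this iteration going: I must show that the order of vanishing of $f(x,y_k)$ strictly increases at each step and tends to infinity. The clean doubling estimate requires, at every stage, the analogue of \eqref{6} — that $\hat B_k f(x,y_k)$ be divisible by one more power of $q$ than $f(x,y_k)$ itself — and the heart of the matter is to propagate this auxiliary divisibility along the iteration (equivalently, to absorb the correction into the kernel directions of $B_k$ modulo $q$ so as to restore it). This is where the adjugate identity and the Weierstrass structure of $q$ enter most delicately, and it is the step I expect to be genuinely hard; the remainder of the argument is formal manipulation of Taylor expansions together with $\hat B B=\det B\cdot\Id$.
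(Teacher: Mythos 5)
Your reduction map, the well-definedness check, and the injectivity argument are all sound (for both of the latter, the point is that a genuine solution of \eqref{1}--\eqref{2} is congruent to $\gamma_0$ modulo $m$ as a full power series, so the relevant $q^{-1}\det$'s are units because units plus elements of the nilpotent ideal $m[[t]]$ are units). The gap is in surjectivity, and it sits exactly where you say you expect it to be hard --- but it is not bookkeeping, it is the whole lemma, and for $r=2$ your plan provably gives nothing. Condition \eqref{4}/\eqref{6} supplies the extra divisibility \emph{only at the initial stage}: it makes $y_1-y_0$ divisible by $q^{r-1}$, hence $f(x,y_1)\equiv 0\bmod q^{2(r-1)}$; but you have no analogue of \eqref{6} at $y_1$, so the next correction is only divisible by $q^{2(r-1)-1}$ and the order of vanishing obeys $s\mapsto 2(s-1)$. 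This recursion increases only when $s>2$; for $r\ge 3$ it escapes to infinity and your plan can be completed, but for $r=2$ it starts at the fixed point $s=2$ and stalls. Worse, for $r=2$ even the individual steps are in doubt: a representative $y_k$ of $\bar y$ agrees with $y^0$ modulo $m$ only up to $O(t^{d(r-1)})=O(t^{d})$, the same order as the leading term of $\det\frac{\partial f}{\partial y}(x^0,y^0)=t^dv_0$, so ``the unit property verified modulo $m$'' fails: $q^{-1}\det B_k$ need not be a unit, and whether it is one depends on the chosen representative (this is the same $r=2$ pathology as the footnote about $\varphi_2$ in \S\ref{sss:Z as lim}). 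Since the deduction of Theorem~\ref{main} at the end of \S\ref{ss:proofGK} uses precisely $r=2$, the case your iteration cannot reach is the one that matters.

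The paper closes this gap not by propagating \eqref{6} along an iteration, but by changing the induction parameter: it inducts on the minimal $a$ with $m^a=0$, using nilpotency as the engine of the proof rather than merely as a device for checking units. Given a solution of \eqref{3}--\eqref{4} over $A$, the inductive hypothesis over $A/m^{a-1}$ produces $\tilde y$ with $\tilde y\equiv\bar y\bmod q^{r-1}$ and $f(x,\tilde y)\equiv 0\bmod m^{a-1}$; crucially $\tilde y\equiv y^0\bmod m$ as a full power series, which is what makes $\det C=q\cdot(\mathrm{unit})$, $C:=\frac{\partial f}{\partial y}(x,\tilde y)$, valid for \emph{every} $r\ge 2$. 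One then seeks the solution in the form $\tilde y-z$ with $z\in q^{r-1}A[t]^l\cap m^{a-1}[t]^l$, and the observation that dissolves your hard step is that the Taylor expansion is \emph{exactly} linear: $f(x,\tilde y-z)=f(x,\tilde y)-Cz$, because the quadratic remainder lies in $m^{2(a-1)}\subseteq m^a=0$. So only one linear equation $Cz=f(x,\tilde y)$ must be solved: \eqref{4} gives $f(x,\tilde y)\in q^{r-1}CA[t]^l+q^rA[t]^l\subseteq q^{r-1}CA[t]^l$ (since $qA[t]^l=(\det C)A[t]^l=C\hat{C}A[t]^l\subseteq CA[t]^l$), hence existence of $z\in q^{r-1}A[t]^l$; multiplying $Cz\equiv 0\bmod m^{a-1}$ by $\hat C$ and using that the monic $q$ is a non-zero-divisor gives uniqueness and $z\in m^{a-1}[t]^l$. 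Thus there is one Newton step per level of the filtration by powers of $m$, no limit to take, and \eqref{4} is used once per level instead of having to be propagated. To repair your argument, replace ``pass to the $q$-adic limit'' by this induction on the nilpotency exponent.
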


\noindent
{\bf Proof.} Let $a$ be the minimal number such that $m^a=0$. We proceed
by induction on $a$, so we can assume that $a\ge 2$ and the lemma is
proved for $A/m^{a-1}$. Then there exists $\tilde y(t)\in A[t]^l$ such
that $\tilde y\mbox{ mod }q^{r-1}=\bar y$ and 
$f(x(t),\tilde y(t))\in m^{a-1}[t]^l$; such $\tilde y$ is unique modulo
$q^{r-1}A[t]^l\cap m^{a-1}[t]^l$. We have to find 
$z(t)\in q^{r-1}A[t]^l\cap m^{a-1}[t]^l$ such that
$f(x(t),\tilde y(t)-z(t))=0$, i.e., $Cz(t)=f(x(t),\tilde y(t))$, where 
$C:=\frac{\partial f}{\partial y}(x(t),\tilde y(t))$. (\ref{3}) implies
that $\det C=q(t)u(t)$ for some invertible $u\in A[t]$. So $z(t)$ is
unique. By (\ref{4}) 
$f(x(t),\tilde y(t))\in q^{r-1}CA[t]^l+q^{r}A[t]^l$. But 
$CA[t]^l\supset (\det C)A[t]^l=qA[t]^l$, so $f(x(t),\tilde y(t))=Cz(t)$
for some $z(t)\in q^{r-1}A[t]^l$. We have 
$Cz(t)=f(x(t),\tilde y(t))\equiv 0\mbox{ mod }m^{a-1}$, so 
$q(t)z(t)\equiv 0\mbox{ mod }m^{a-1}$ and finally
$q(t)\equiv 0\mbox{ mod }m^{a-1}$.
\hfill\qedsymbol

\medskip

So the set of $A$-deformations of $\gamma_0$ can be identified with the
set of solutions of the system (\ref{3}-\ref{4}). This system is
essentially finite because $x(t)$ is relevant only modulo $q^{r}$.
E.g., if $r=2$ we can write
$x(t)$ as $q^2(t)\xi (t)+\bar x$, $\xi\in A[[t]]^n$, $\bar x\in A[t]^n$,
$\deg\bar x<2d$, and consider $\xi (t)$, $\bar x$, $q(t)$, and $\bar y$
to be the unknowns (rather than $x(t)$, $q(t)$, $\bar y$); then
(\ref{3}-\ref{4}) becomes a finite system of equations for $q$, $\bar x$,
$\bar y$ (and $\xi$ is not involved in these equations). So
$\cL(X)_{\gamma_0}$ is isomorphic to $D^{\infty}\times Y_y$, where the
$k$-scheme $Y$ of finite type and the point $y\in Y(k)$ are defined as
follows: for every $k$-algebra $R$ the set $Y(R)$ consists of triples 
$(q,\bar x ,\bar y)$ where $q\in R[t]$ is monic of degree $d$, 
$\bar x\in R[t]^n/(q^2)$, $\bar y\in R[t]^l/(q)$,
$\det B\equiv 0\mbox{ mod }q$, 
$B:=\frac{\partial f}{\partial y}(\bar x,\bar y)$, 
$f(\bar x,\bar y)\equiv 0\mbox{ mod }q$, and 
$\hat B f(\bar x ,\bar y)\equiv 0\mbox{ mod }q^2$;
$y\in Y(k)$ corresponds to $q=t^d$,
$\bar x=x^0(t)\mbox{ mod }t^{2d}$, $\bar y=y^0(t)\mbox{ mod }t^{d}$.
\hfill\qedsymbol

\section{Rephrasing the proof from \S\ref{s:GK}}  \label{s:rephrasing}
\subsection{Deducing Theorem~\ref{main} from Proposition~\ref{p:BK}}
\subsubsection{The setting}   \label{sss:the setting}

Let $\BA^n$ denote the $n$-dimensional affine space over the field $k$.
Let $f$ be a morphism $\BA^{n+l}\to \BA^l$, i.e., 
$f=(f_1,\ldots ,f_l)$ and every $f_i$ is a polynomial
$f_i(x,y)$ where $x=(x_1,\ldots ,x_n)\in \BA^n$ and
$y=(y_1,\ldots ,y_l)\in \BA^l$. Set $Q:=\det ({\partial f\over\partial y})$. 

Set $X:=f^{-1}(0)\subset\BA^{n+l}$;  for a $k$-algebra $A$ we let $X(A)$ denote the set of $A$-points of $X$. Let $\Delta_X\subset X$ be the subscheme of zeros of $Q$. Let $N$ be a non-negative integer.

In \S\ref{ss:proofBK} we will prove the following

\begin{prop}   \label{p:BK}
(i) There exists a $k$-scheme $Z$ representing the following functor: for any $k$-algebra $R$, an $R$-point of $Z$ is a pair consisting of a monic polynomial $q\in R[t]$ of degree $N$ and an element of the set $\underset{r}{\underset{\longleftarrow}\lim} X(R[t]/(q^r))$ such that
the scheme-theoretic preimage of $\Delta_X$ in $\Spec R[t]/(q^2)$ equals 
$\Spec R[t]/(q)$.

(ii) $Z$ is a product of a $k$-scheme of finite type and a (typically infinite-dimensional) affine space.
\end{prop}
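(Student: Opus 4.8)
The plan is to run the argument of \S\ref{ss:proofGK} over an arbitrary $k$-algebra $R$ in place of a test-ring, replacing the induction on nilpotency by $q$-adic convergence. First I would pass from the inverse system to a single ring: since $X$ is affine, $X(-)$ preserves limits, so for a fixed monic $q\in R[t]$ of degree $N$ one has
\[
\varprojlim_r X(R[t]/(q^r))=X(S),\qquad S:=\varprojlim_r R[t]/(q^r).
\]
Because $q$ is monic, $q$ is a non-zero-divisor in $S$, the ring $S$ is $q$-adically complete and separated, and every element of $S$ has a unique expansion $\sum_{j\ge 0}a_jq^j$ with $a_j\in R[t]$, $\deg a_j<N$. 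I would then reinterpret the $\Delta_X$-condition: writing the $S$-point as $(x,y)$ and $B:=\frac{\partial f}{\partial y}(x,y)$, the requirement that the scheme-theoretic preimage of $\Delta_X$ in $\Spec R[t]/(q^2)$ be $\Spec R[t]/(q)$ says exactly that $Q(x,y)=\det B$ generates the ideal $(q)$ in $R[t]/(q^2)$, i.e.\ $Q(x,y)=q\,u$ with $u\in S^{\times}$ (a unit modulo $q$ lifts to a unit of $S$ by completeness). Thus an $R$-point of $Z$ is a triple $(q,x,y)$ with $q$ monic of degree $N$, $x\in S^n$, $y\in S^l$, $f(x,y)=0$, and $Q(x,y)=q\,u$.

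The crux is the analogue, over an arbitrary $R$, of the Lemma of \S\ref{ss:proofGK}. I would fix $r\ge 3$ and prove: for fixed $q$ and $x\in S^n$, the reduction $y\mapsto \bar y:=y\bmod q^{r-1}$ is a bijection from the set of $y\in S^l$ with $f(x,y)=0$ and $Q(x,y)=q\,u$ ($u\in S^{\times}$) onto the set of jets $\bar y\in(R[t]/(q^{r-1}))^l$ satisfying the $\Delta_X$-condition together with $f(x,\bar y)\equiv 0\bmod q^{r-1}$ and $\widehat B\,f(x,\bar y)\equiv 0\bmod q^{r}$; as in \S\ref{ss:proofGK} these are well posed for $\bar y$ modulo $q^{r-1}$ and involve $x$ only modulo $q^{r}$. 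In \S\ref{ss:proofGK} the corresponding statement is proved by descending induction on the nilpotency index of the maximal ideal of the test-ring, and this is the one step that genuinely fails for a general $R$; supplying a substitute is what I expect to be the main obstacle. I would argue instead by $q$-adic Newton iteration: from a lift $y_0$ of $\bar y$ the correction $\delta:=\widehat B\,f(x,y_0)/(q\,u)$ lies in $q^{r-1}S^l$ by the condition $\widehat B\,f\equiv 0\bmod q^{r}$, so it leaves $\bar y$ unchanged, and — using $B\widehat B=Q\cdot\Id=q\,u\,\Id$, whence $B\delta=f(x,y_0)$ — it improves the order of vanishing of $f(x,y_0)$ from $q^{r-1}$ to $q^{2(r-1)}$; iterating gives a $q$-adically Cauchy sequence whose limit exists by completeness of $S$, while $q$-adic separatedness together with $Q=q\,u$ forces uniqueness. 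The essential bookkeeping is to check that the two displayed conditions are preserved along the iteration, exactly as the equations $(\ref{3})$–$(\ref{6})$ are manipulated in \S\ref{ss:proofGK}.

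Granting this lemma, parts (i) and (ii) follow together. Since the conditions involve $x$ only modulo $q^{r}$, the $q$-adic expansion lets me write $x=\bar x+\sum_{j\ge r}a_jq^{\,j}$ with $\bar x:=x\bmod q^{r}$ and the coefficients $a_j\in R[t]$, $\deg a_j<N$, for $j\ge r$ entirely unconstrained; they range over $\prod_{j\ge r}R^{Nn}$. Let $Y$ be the $k$-functor whose $R$-points are the triples $(q,\bar x,\bar y)$ with $q$ monic of degree $N$, $\bar x\in(R[t]/(q^{r}))^n$, $\bar y\in(R[t]/(q^{r-1}))^l$, subject to the finitely many equations above. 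As $q$ has $N$ coefficients and $R[t]/(q^r)$, $R[t]/(q^{r-1})$ are free $R$-modules of finite rank, $Y$ is a closed subscheme of a finite-dimensional affine space, hence a $k$-scheme of finite type. The assignment
\[
(q,x,y)\longmapsto\bigl((q,\bar x,\bar y),\,(a_j)_{j\ge r}\bigr),\qquad \bar x=x\bmod q^{r},\ \ \bar y=y\bmod q^{r-1},
\]
is a natural transformation $Z\to Y\times\BA^{\infty}$; its inverse rebuilds $x$ from $\bar x$ and $(a_j)_{j\ge r}$ and rebuilds the full $y$ from $(q,x,\bar y)$ by the Newton lemma. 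This is a functorial bijection, so $Z\cong Y\times\BA^{\infty}$. Representability of $Y\times\BA^{\infty}$ then yields (i), and the displayed isomorphism is precisely the product decomposition asserted in (ii).
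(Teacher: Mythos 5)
Your proposal is correct, and its engine coincides with the paper's: Newton's correction $\delta=\hat B f/Q$ used to reconstruct $y$ from its $(r-1)$-jet, the finite-type factor being exactly the scheme of triples $(q,\bar x,\bar y)$ cut out by conditions (1)--(6) of \S\ref{sss:Z as lim} (your $Y$ is the paper's $Z_r$), and the affine space being the free higher $q$-adic coefficients of $x$. The packaging, however, is genuinely different. The paper never completes anything: it writes $Z=\varprojlim_r Z_r$ and proves the finite-level statement that for $r\ge 3$ the morphism $\varphi_r:Z_{r+1}\to Z_r\times_{V_r}V_{r+1}$ is an isomorphism, the inverse being a \emph{single} Newton step; the product decomposition of $Z$ then falls out of the inverse limit of these isomorphisms with no convergence argument, since the $V$-tower is visibly a product of affine spaces. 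You instead collapse the whole tower into one lemma over the $q$-adically complete ring $S=\varprojlim_r R[t]/(q^r)$ and run infinitely many Newton steps at once, with completeness giving existence and separatedness together with $Q=qu$ giving uniqueness; this is your substitute for the nilpotency induction of \S\ref{ss:proofGK}, which you correctly identify as the step unavailable over a general $R$, and it is a valid substitute. What your route buys is self-containedness (one bijection of sets of points, no tower of schemes and transition maps to set up); what the paper's route buys is that it stays entirely at finite level and yields extra structure used later in the article (smoothness of $Z_{r+1}\to Z_r$ for all $r\ge 2$, \'etaleness of $\varphi_r$ even for $r=2$), which is what ties the schemes $Z_r$ to the Newton groupoids of \S\ref{s:Defining Newton}.

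Two small repairs to your write-up. First, $Y$ is not a \emph{closed} subscheme of a finite-dimensional affine space: the $\Delta_X$-condition includes the invertibility of $q^{-1}Q(\bar x,\bar y)$ modulo $q$ (condition (6) of \S\ref{sss:Z as lim}), which is not a polynomial equation; $Y$ is still affine of finite type (adjoin an inverse as an auxiliary variable), which is all you need. Second, the bookkeeping of the iteration gives $\mathrm{ord}\,\delta_{k+1}\ge 2\,\mathrm{ord}\,\delta_k-1$, so both divergence of the orders and your separatedness-based uniqueness argument require $\mathrm{ord}\,\delta_0\ge r-1\ge 2$; this is precisely where your standing hypothesis $r\ge 3$ is essential (consistent with the paper's footnote that the $r=2$ analogue of the key isomorphism fails), so it deserves to be flagged as such rather than left implicit.
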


\begin{rem}
The property from statement (i) clearly implies that for \emph{every} $r\ge 2$ the scheme-theoretic preimage of $\Delta_X$ in $\Spec R[t]/(q^r)$ equals 
$\Spec R[t]/(q)$.
\end{rem}

\subsubsection{Deducing Theorem~\ref{main} from Proposition~\ref{p:BK}}
As explained at the beginning of \S\ref{ss:proofGK}, we can assume that the scheme $X$ from Theorem~\ref{main} equals $f^{-1}(0)$ for some morphism 
$f:\BA^{n+l}\to \BA^l$ and the image of the formal arc $\gamma_0 :\Spec k[[t]]\to X$ is not contained in $\Delta_X$. Then $\gamma_0^{-1} (\Delta_X)=\Spec k[t]/(t^N)$ for some $N\in\BN$. Let $Z$ be the scheme from Proposition~\ref{p:BK}(i) corresponding to this $N$. Let $q=t^N$, then the pair $(q,\gamma_0)$ defines a $k$-point of $Z$. The Weierstrass division theorem implies that the formal neighborhood of $(q,\gamma_0)$ in $Z$ is equal to the formal neighborhood of $\gamma_0$ in the scheme of formal arcs in $X$.  So Theorem~\ref{main} follows from Proposition~\ref{p:BK}(ii).

\subsection{Proof of Proposition~\ref{p:BK}}  \label{ss:proofBK}
\subsubsection{Representing $Z$ as a  limit}  \label{sss:Z as lim}
We will represent $Z$ as a projective limit of certain $k$-schemes $Z_r$ of finite type, $r\ge 2$, so that  for $r\ge 3$ the scheme $Z_{r+1}$ is isomorphic to a product of $Z_r$ and an affine space. Let us note that the schemes $Z_r$ defined below were secretly used in the proof of Theorem~\ref{main}. 

We define $Z_r$ to represent the functor that associates to a $k$-algebra $R$ the set of triples $(q, \bar{x}, \bar{y})$ satisfying the following conditions:

\begin{enumerate}
\item[(1)]
$q\in R[t]$ is a monic polynomial  of degree $N$;

\item[(2)]
$\bar{x}\in (R[t]/(q^r))^n$, $\bar{y}\in (R[t]/(q^{r-1}))^l$;

\item[(3)]
 $f(\bar{x},\bar{y})\equiv 0 \mbox{ mod } q^{r-1}$;
 
\item[(4)]
$Q(\bar x,\bar y)\equiv 0  \mbox{ mod } q$;

\item[(5)]
let $C:=\frac{\partial f}{\partial y}(\bar{x},\bar{y})$ and let $\hat{C}$ be the matrix adjugate to $C$, then
$$\hat{C} f(\bar x,\bar y)\equiv 0  \mbox{ mod } q^r;$$
using  (3), (4), and the equality $\hat{C} C=Q(\bar x,\bar y)$, one easily checks that the congruence modulo $q^r$ makes sense  (even though  $\bar{y}$ is defined only modulo~$q^{r-1}$);

\item[(6)]
if $r\ge 3$ then the element $q^{-1}Q(\bar x,\bar y)\in R[t]/(q^{r-2})$ is invertible.
\end{enumerate}
It is easy to check that this functor is indeed representable by an affine scheme of finite type over $k$. Now it remains to prove the following

\begin{lem}
(i) The canonical morphism $Z_{r+1}\to Z_r$ is smooth. Its fibers have dimension $nN$. 

(ii) If $r\ge 3$ then $Z_{r+1}$ is isomorphic (as a scheme over $Z_r$) to a product of $Z_r$ and an affine space.
\end{lem}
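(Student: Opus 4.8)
The plan is to analyze the forgetful map $Z_{r+1} \to Z_r$ fiber by fiber, reducing both the smoothness statement (i) and the splitting statement (ii) to a concrete linear-algebra computation over the base ring. Given an $R$-point $(q, \bar x, \bar y)$ of $Z_r$, a lift to $Z_{r+1}$ consists of choosing $\bar x' \in (R[t]/(q^{r+1}))^n$ and $\bar y' \in (R[t]/(q^r))^l$ reducing to $\bar x$ and $\bar y$ respectively, subject to conditions (3)--(5) at level $r+1$. The key structural input is condition (6): for $r \ge 2$ the element $q^{-1}Q(\bar x, \bar y)$ is invertible modulo $q^{r-2}$ (this is exactly why the hypothesis $r \ge 3$ appears in (ii), ensuring the invertibility persists to the level needed), so that $Q(\bar x, \bar y)$ factors as $q$ times a unit in the relevant quotient. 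I would first record this factorization, since it makes the adjugate $\hat C$ behave like $q$ times an invertible matrix against any vector in the image of $C$, which is what lets one solve the congruences in (5).

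**Next I would** set up the lifting problem as a deformation computation. Write $\bar x' = \bar x + q^r \xi$ with $\xi \in (R[t]/(q))^n$ free, and $\bar y' = \bar y + q^{r-1}\eta$ with $\eta \in (R[t]/(q))^l$ to be determined. The variable $\xi$ ranges over an $nN$-dimensional affine space (since $\deg q = N$) and, crucially, I expect $\xi$ to be a genuinely free parameter: the defining equations (3)--(5) involve $\bar x$ only through residues that, modulo the relevant power of $q$, are unaffected by the $q^r \xi$ correction once $r$ is large enough. This is the source of the fiber dimension $nN$ asserted in (i). The substantive content is then to show that for each free choice of $\xi$, the conditions on $\eta$ coming from (3) and (5) at level $r+1$ have a \emph{unique} solution. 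Expanding $f(\bar x', \bar y') = f(\bar x, \bar y) + C\,q^{r-1}\eta + (\text{higher order})$ and using that (3)--(5) already hold at level $r$, the new constraint becomes a linear equation $C\eta \equiv (\text{known}) \bmod q$; because $\det C = Q \equiv q \cdot(\text{unit})$, multiplying by $\hat C$ and invoking condition (6) shows this equation is uniquely solvable for $\eta$. The mechanism mirrors exactly the inductive step in the Lemma of \S\ref{ss:proofGK} (solving $Cz = f$ using $\det C = qu$), and I would cite that argument rather than redo it.

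**The hard part will be** handling the two congruences in (5) simultaneously and verifying that the solution for $\eta$ is not merely unique but that the \emph{entire} fiber is an affine space mapping isomorphically onto the $\xi$-space, i.e.\ disentangling which conditions pin down $\eta$ and which leave $\xi$ free. Concretely, condition (5) at level $r+1$ (the congruence $\hat C f \equiv 0 \bmod q^{r+1}$) is the delicate one: one must check that after $\eta$ is determined by the level-$r+1$ version of (3), condition (5) is \emph{automatically} satisfied and imposes no further constraint, using the identity $\hat C C = Q = q\cdot(\text{unit})$ together with (6). This is where $r \ge 3$ enters for part (ii): one needs the unit $q^{-1}Q$ to be invertible modulo a high enough power of $q$ so that the assignment $\xi \mapsto \eta(\xi)$ is \emph{linear} (in fact affine) in $\xi$, which is precisely what promotes the smooth morphism of (i) into the trivial affine-space bundle of (ii).

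**Finally I would** assemble the pieces: part (i) follows because the fiber over each $R$-point is, functorially in $R$, an affine space of dimension $nN$ (the $\xi$-coordinates), so the morphism is smooth of relative dimension $nN$; and part (ii) follows because, when $r \ge 3$, the solved-for variable $\eta$ depends affine-linearly on $\xi$ and on the base point, giving an explicit isomorphism $Z_{r+1} \cong Z_r \times \BA^{nN}$ over $Z_r$. I would present the explicit change of variables (analogous to writing $x(t) = q^2(t)\xi(t) + \bar x$ in \S\ref{ss:proofGK}) that exhibits this product decomposition, and remark that the failure of linearity at $r = 2$ is exactly why (ii) is asserted only for $r \ge 3$.
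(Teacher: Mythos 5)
Your overall plan (the $\bar x$-lift is free, the $\bar y$-lift is uniquely determined, and the dependence is affine) is the right shape and matches the paper's strategy, but the step that is supposed to carry the whole argument is wrong. By condition (4), $\det C=Q(\bar x,\bar y)\equiv 0\bmod q$, so $C$ is a \emph{singular} matrix over $R[t]/(q)$: its determinant vanishes, hence it has nontrivial kernel at every point of $\Spec R[t]/(q)$. Therefore the linear equation $C\eta\equiv(\text{known})\bmod q$ that you extract from condition (3) at level $r+1$ is never uniquely solvable; its solution set, when nonempty, is a coset of $\Ker(C\bmod q)$. Your remedy --- multiply by $\hat C$ and invoke (6) --- produces $Q\eta\equiv\hat C\cdot(\text{known})\bmod q$, whose left-hand side is $0$ because $q\mid Q$; what you obtain is the consistency condition $\hat C\cdot(\text{known})\equiv 0\bmod q$ (which is just condition (5) at level $r$) and no information about $\eta$ whatsoever. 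You have the roles of (3) and (5) backwards: it is condition (5) at level $r+1$, namely $\hat C f\equiv 0\bmod q^{r+1}$, that pins down $\eta$. Dividing it by $q^r$ gives $(q^{-1}Q)\,\eta\equiv -q^{-r}\hat C f(\bar x',\tilde y)\bmod q$ (here $\tilde y$ is any lift of $\bar y$), and the coefficient is now the \emph{scalar} $q^{-1}Q$, a unit when $r\ge 3$ by (6); one then checks that condition (3) at level $r+1$ holds automatically for this $\eta$ (with this choice one even gets $f(\bar x',\bar y')\equiv 0\bmod q^{r+1}$). This is exactly the paper's Newton formula $h=(q^{-1}Q(\bar x,\bar y))^{-1}q^{-1}\hat C f(\bar x,\tilde y)$. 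Note also that the inductive lemma of \S\ref{ss:proofGK}, which you propose to cite instead of redoing the computation, does not transfer: there one solves $Cz=f$ over $A[t]$, where $\det C=qu$ is a non-zero-divisor; you are solving modulo $q$, where $\det C$ is zero.

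Separately, your fiberwise description cannot prove part (i) for $r=2$, which the lemma asserts and the paper needs (the limit starts at $Z_2$). For $r=2$, condition (6) is vacuous, so $q^{-1}Q$ need not be a unit, and the expansion of $f$ modulo $q^{r+1}=q^3$ retains honest quadratic terms in $q^{r-1}\eta=q\eta$ (since $2(r-1)<r+1$). Consequently, for a fixed $\xi$ the $\eta$-equations may have no solution (invertibility of $q^{-1}Q$ modulo $q$ is a genuinely new open condition at level $3$, so $Z_3\to Z_2$ is not even surjective in general) or several solutions; the paper's footnote records both failures already for $n=l=N=1$, $f(x,y)=y(y-P(x))$. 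So the fibers of $Z_3\to Z_2$ are \emph{not} affine spaces parametrized by $\xi$, and a uniform version of your argument would ``prove'' $Z_3\cong Z_2\times\BA^{nN}$, which is false. The paper's proof is organized precisely to sidestep this: it introduces the affine space $V_r$ of pairs $(q,\bar x)$ and the morphism $\varphi_r:Z_{r+1}\to Z_r\times_{V_r}V_{r+1}$, proves that $\varphi_r$ is \emph{\'etale} for every $r\ge 2$ --- which already yields smoothness and fiber dimension $nN$, since $Z_r\times_{V_r}V_{r+1}\to Z_r$ is an $nN$-dimensional affine-space bundle --- and proves that $\varphi_r$ is an isomorphism only for $r\ge 3$, which is where (ii) comes from. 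To repair your proof of (i) at $r=2$ you would need an \'etaleness (infinitesimal lifting) argument in place of the unique-solvability claim, since the latter is simply false there.
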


\begin{proof}
Let $V_r$ be the $k$-scheme whose $R$-points are pairs  $(q, \bar{x})$, where $q\in R[t]$ is a monic polynomial  of degree $N$ and 
$\bar{x}\in (R[t]/(q^r))^n$; of course, this scheme is isomorphic to an affine space of dimension $N(1+nr)$. For each $r$ we have a canonical morphism $Z_r\to V_r$ (forgetting $\bar y$). These morphisms are compatible with each other, so we get a morphism $\varphi_r:Z_{r+1}\to Z_r\underset{V_r}\times V_{r+1}$. To prove the lemma, one checks straightforwardly that $\varphi_r$ is etale, and for $r\ge 3$ the map  $\varphi_r$ is an isomorphism.\footnote{The map $\varphi_2$ is neither surjective nor injective, in general; both phenomena occur already if $n=l=N=1$ and $f(x,y)=y(y-P(x))$, where $P(x)$ is a polynomial.}

Let us only describe $\varphi_r^{-1}$ assuming that $r\ge 3$. An $R$-point of $Z_r\underset{V_r}\times V_{r+1}$ is a triple 
$$(q, \bar{x}, \bar{y}), \quad \bar{x}\in (R[t]/(q^{r+1}))^n,  \quad\bar{y}\in (R[t]/(q^{r-1}))^l$$
satisfying properties (3)-(6) from \S\ref{sss:Z as lim} (as usual, $q\in R[t]$ is monic  of degree $N$). It is easy to check that $\varphi_r^{-1}$ is given by Newton's formula
$$\varphi_r^{-1}(q, \bar{x}, \bar{y})=(q, \bar{x}, \tilde{y}-h), \quad h:=C^{-1}f(\bar x,\tilde y),
$$
where $\tilde y\in (R[t]/(q^r))^l$ is any preimage of $\bar{y}\in (R[t]/(q^{r-1}))^l$. Note that $h$ is a well-defined element of $(q^{r-1}R[t]^l)/(q^rR[t]^l)$: indeed,
$h=(q^{-1}Q(\bar x, \bar{y}))^{-1} q^{-1}\hat C f(\bar x,\tilde y)$.
\end{proof}

The rest of the article is devoted to the geometric interpretation of the schemes $Z_r$ in terms of the \emph{Newton groupoids}. In some sense, the idea goes back to Finkelberg and Mirkovi\'c (see \S\ref{ss:FM}).

\section{Introduction to the Newton groupoids}   \label{s:Intro Newton groupoid}
\subsection{The language of groupoids}
\subsubsection{Abstract groupoids}   \label{sss:abstract groupoids} 
Recall that an (abstract) groupoid is just a category in which all morphisms are invertible. So the data defining a groupoid are as follows: the set of objects $X$, the set of morphisms $\Gamma$, the ``source" map $p_1:\Gamma\to X$, the ``target" map $p_2:\Gamma\to X$, and the composition map $c:\Gamma\times_X\Gamma\to\Gamma$. These data should have certain properties; in particular, one should have the ``unit" map $e:X\to\Gamma$, $x\mapsto\id_x$; one should also have the inversion map $i:\Gamma\iso\Gamma$. (More details can be found in \cite[Expos\'e V]{SGA3} or \cite{CF}.)

\subsubsection{Groupoids in the category of $k$-schemes}   \label{sss:groupoids in schemes} 
In the situation of \S\ref{sss:abstract groupoids} one can consider $e$ and $i$ as a part of the data; then all the properties become \emph{identities} (i.e., they do not involve existence quantifiers). After this, the notion of groupoid in any category with fiber products becomes clear. In particular, one has the notion of groupoid in the category of $k$-schemes. More details can be found in \cite[Expos\'e V]{SGA3} or \cite{CF}.

One can also define the notion of groupoid in any category using the language of $S$-points, see  \cite[Expos\'e V]{SGA3}.

\subsubsection{Conventions}
In the situation of \S\ref{sss:abstract groupoids} or \S\ref{sss:groupoids in schemes} one says that $\Gamma$ is a groupoid on $X$ or that $\Gamma$ is a groupoid acting on $X$.

We usually write a groupoid as $\Gamma\underset{p_2}{\overset{p_1}\rightrightarrows} X$ or as 
$\Gamma\overset{(p_1,p_2)}\longrightarrow X\times X$ 
(without mentioning the composition map explicitlly). This will not lead to confusion because we are mostly interested in the situation where the map $\Gamma\overset{(p_1,p_2)}\longrightarrow X\times X$ is a birational isomorphism.

 \emph{From now on, we consider only groupoids in the category of $k$-schemes} (unless stated otherwise).
 
\subsubsection{Smooth groupoids and quotient stacks}
A groupoid $\Gamma\underset{p_2}{\overset{p_1}\rightrightarrows} X$ is said to be \emph{smooth} if $p_1$ is smooth. This is equivalent to $p_2$ being smooth (indeed, the inversion map $i:\Gamma\iso\Gamma$ interchanges $p_1$ and $p_2$).

Let $\Gamma\underset{p_2}{\overset{p_1}\rightrightarrows} X$ be a smooth groupoid such that the morphism $\Gamma\overset{(p_1,p_2)}\longrightarrow X\times X$ is quasi-compact and quasi-separated (this is a very mild assumption). Then one defines the \emph{quotient stack} $X/\Gamma$, which is an \emph{algebraic stack}, see \cite{LM, Ol, St}. ``Almost all" algebraic stacks can be obtained this way. (To get all of them, one has to allow $\Gamma$ to be an algebraic space rather than a scheme.)

\subsection{Pointy stacks and $\oMaps$}
\subsubsection{Pointy stacks}   \label{sss:Pointy stacks} 
By a \emph{pointy stack} we mean an algebraic $k$-stack locally of finite type which has a dense open substack isomorphic to the point $\Spec k$. Note that such an open substack is clearly unique.

If an action of an algebraic group $G$ on a $k$-scheme $X$ locally of finite type has a dense open orbit on which the action is free then the stack $X/G$ is pointy. E.g., $\BA^1/\BG_m$ is a pointy stack.

More generally, let $X$ be a $k$-scheme locally of finite type and $\Gamma$ a smooth groupoid acting on $X$ so that the corresponding morphism $\Gamma\to X\times X$ is an isomorphism over $U\times U$ for some dense open $U\subset X$. Then the stack $X/\Gamma$ is pointy.

\subsubsection{Maps from a curve to a pointy stack} \label{sss:oMaps}
Let $\cY\supset\Spec k$ be a pointy stack; let $\cY'\subset\cY$ be the reduced closed substack such that $\cY\setminus\cY'=\Spec k$. We assume that the diagonal morphism $\cY\to\cY\times\cY$ is separated. (The assumption is mild because usually the diagonal morphism is affine.)

On the other hand, let $C$ be a smooth curve over $k$ (e.g., $\BA^1$).

In this situation we define $\oMaps (C,\cY)$ to be the functor that associates to a $k$-scheme $S$ the set\footnote{A priori, such morphisms form a groupoid rather than a set. But separateness of  the diagonal morphism $\cY\to\cY\times\cY$ easily implies that this groupoid is a set.} of morphisms 
$f:C\times S\to\cY$ such that $f^{-1}(\cY')$ is finite over $S$.

\begin{conj}
In this situation the functor $\oMaps (C,\cY)$ is representable by an algebraic space locally of finite type over $k$.
\end{conj}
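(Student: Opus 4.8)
The plan is to verify the hypotheses of Artin's representability criterion for algebraic spaces locally of finite type over $k$ (see \cite{LM} and \cite{St}), namely that $\oMaps(C,\cY)$ is (a) an \'etale sheaf of sets, (b) limit-preserving, (c) equipped with a coherent deformation-obstruction theory, (d) has effective formal objects, and (e) satisfies openness of versality. The separatedness assumption on $\cY\to\cY\times\cY$ plays two roles: as in the footnote it forces the mapping groupoid to be a set, and it will make the infinitesimal automorphism spaces vanish, so that the target is an algebraic \emph{space} rather than a stack. Conditions (a) and (b) are routine: morphisms to the stack $\cY$ descend along fppf covers and finiteness of $f^{-1}(\cY')$ over the base is an fppf-local condition, giving the sheaf property; and since $C$ is of finite type and $\cY$ is locally of finite type, a morphism $f:C\times S\to\cY$ with $S=\varprojlim_j S_j$ descends to some $C\times S_j\to\cY$, while the (open, constructible) finiteness condition descends as well, giving limit-preservation.

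The crux of the argument, and the source of all finiteness, is the following structural observation underlying (c). Let $j:\Spec k\hookrightarrow\cY$ be the dense open point. Since $j$ is an open immersion, $j^{*}L_{\cY}=0$; hence for any $f:C\times S\to\cY$ the restriction of $f^{*}L_{\cY}$ to $f^{-1}(\cY\setminus\cY')=(C\times S)\setminus D$ vanishes, where $D:=f^{-1}(\cY')$, because there $f$ factors through $j$. Thus $f^{*}L_{\cY}$, equivalently the tangent complex $f^{*}T_{\cY}$, is supported set-theoretically on $D$, which by hypothesis is \emph{finite}, hence proper and affine, over $S$. Consequently the infinitesimal automorphisms, deformations, and obstructions of $f$ over a square-zero thickening of $S$ with ideal $J$, a priori living in $\HH^{i}(C\times S,\,f^{*}T_{\cY}\otimes J)$ for $i=-1,0,1$, are computed by pushforward along the affine morphism $D\to S$ and are therefore \emph{coherent} $\cO_{S}$-modules in a bounded range of degrees. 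The degree $-1$ piece vanishes by the separatedness of the diagonal of $\cY$; the degrees $0$ and $1$ give finite-dimensional tangent and obstruction spaces over fields together with the Schlessinger homogeneity conditions. Coherence of this theory yields openness of versality (e) by the Artin/Hall--Rydh machinery.

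The main obstacle is effectivity (d): given a complete local Noetherian $k$-algebra $\hat R=\varprojlim_n R_n$ (with $R_n:=\hat R/\fm^{n+1}$) and a compatible system $f_n:C\times\Spec R_n\to\cY$ whose $D_n:=f_n^{-1}(\cY')$ is finite over $\Spec R_n$, one must produce $\hat f:C\times\Spec\hat R\to\cY$. Here the non-properness of $C$ is felt, and the finiteness hypothesis is exactly what repairs it. The supports $|D_n|$ all coincide with the fixed finite set $\{p_1,\dots,p_s\}=|D_0|\subset C$; hence on $C^{\circ}:=C\setminus\{p_1,\dots,p_s\}$ every $f_n$ factors through the open point, and these restrictions glue tautologically to the constant map $C^{\circ}\times\Spec\hat R\to\Spec k\hookrightarrow\cY$. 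It remains to algebraize the system in a neighborhood of each $p_i$, which is precisely a parametrized form of the Grinberg--Kazhdan phenomenon: completing at $p_i$ gives compatible formal maps $\mathrm{Spf}\,R_n[[t]]\to\cY$ whose only non-constant behaviour is concentrated on $D_n\subset V(q)$ for a monic $q\in R_n[t]$ with $q\equiv t^{\ell_0}\bmod\fm$. By Weierstrass division this data is determined to finite order in $t$, exactly as in the explicit description preceding Proposition~\ref{p:BK} and the construction of the schemes $Z_r$ in \S\ref{s:rephrasing}; I expect to algebraize it by the same mechanism, realizing the neighborhood-map as a limit of maps on the finite-type thickenings $\Spec R_n[t]/(q^{r})$. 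The delicate point is to certify that the algebraized map still has finite preimage of $\cY'$, i.e. that the length of $D_n$ over the base does not escape in the limit; this is controlled because the algebraization $D_{\infty}\to\Spec\hat R$ of $(D_n)$ (Grothendieck existence for the finite, hence proper, system) has closed fibre the fixed finite scheme $D_0$, so by Nakayama its structure sheaf is generated by $\mathrm{length}(D_0)$ elements over $\hat R$.

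Finally, the local algebraizations near the $p_i$ are glued to the constant map on $C^{\circ}$ using that $\cY$ is a stack: the pieces agree on overlaps, where every map factors through $\Spec k$. Granting effectivity, Artin's theorem applies and yields the desired algebraic space, locally of finite type over $k$, with the coherence of (c) furnishing both the finiteness and the openness of versality needed for the conclusion. In short, all the work is concentrated on the finite locus $D=f^{-1}(\cY')$, over which $\cY$ behaves like a proper target; the pointy hypothesis makes the complement contribute nothing, and the finiteness hypothesis converts the apparent infinite-order formal data into the finite-order Weierstrass data already exploited in \S\ref{s:GK}.
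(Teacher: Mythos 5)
This statement is labeled a \emph{Conjecture} in the paper: no proof of it is given there, so your attempt cannot be compared with anything and must stand on its own --- and it does not, because the decisive step is missing. The Artin-criteria skeleton (sheaf, limit-preserving, coherent deformation--obstruction theory, effectivity, openness of versality) is the natural strategy, and two of your observations are sound: $f^*L_{\cY}$ is supported on the finite locus $D=f^{-1}(\cY')$, which makes the deformation theory coherent; and automorphisms (including infinitesimal ones) die by separatedness of the diagonal together with schematic density of $(C\times S)\setminus D$ --- though note the latter density argument is what is really needed, not the cohomological support argument alone, since sheaves supported on $D$ have plenty of sections. The fatal gap is effectivity, which is precisely the hard core of the conjecture, and your treatment of it is circular. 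You invoke ``a parametrized form of the Grinberg--Kazhdan phenomenon, exactly as in the construction of the schemes $Z_r$'' to conclude that the formal data near each $p_i$ is of finite order. But the schemes $Z_r$ of \S\ref{sss:Z as lim} and the Weierstrass-division structure behind Proposition~\ref{p:BK} exist only for the very special stacks $X/\Gamma_r$ attached to a presentation $X=f^{-1}(0)\subset\BA^{n+l}$; for an abstract pointy stack $\cY$, where $\cY'$ is merely a reduced closed substack (the conjecture does \emph{not} assume the Cartier-divisor property of \S\ref{sss:Delta}) and the diagonal is only separated (not affine, not even quasi-compact), no such finite-order description is available, and producing one is essentially equivalent to the conjecture itself. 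Effectivity genuinely fails without the pointy hypothesis --- a compatible system of maps $\BA^1_{R_n}\to\BA^1$ is an element of $\varprojlim R_n[t]$, which is strictly larger than $\hat{R}[t]$ --- so any proof must \emph{extract} the needed finiteness from pointiness; your sketch assumes it. Your Grothendieck-existence/Nakayama argument bounds only the closed subscheme $D_\infty$, which carries far less information than the map itself.

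Two further steps are asserted rather than proven. First, gluing the (hypothetical) local algebraizations near the $p_i$ to the constant map on $C^{\circ}\times\Spec\hat{R}$ is not an application of ``$\cY$ is a stack'': the cover by formal (or completed local) neighborhoods of the $p_i$ together with the open complement is not a cover in the \'etale or fppf topology, so one needs a Beauville--Laszlo-type fpqc descent statement for morphisms to $\cY$, plus a chosen $2$-isomorphism on the overlap; such descent is a theorem requiring hypotheses on the diagonal beyond separatedness. Second, openness of versality is not a formal consequence of coherence; the Hall--Rydh machinery you cite establishes algebraicity of mapping stacks for \emph{proper} flat sources and targets with affine (or quasi-affine) diagonal or affine stabilizers --- exactly the hypotheses that fail here, and whose failure is the reason the statement is a conjecture rather than a corollary of known results. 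In short: a reasonable research plan, correctly identifying where the difficulty sits, but not a proof; the statement remains open.
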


\subsubsection{Easy example} 
Let $\cY=\BA^1/\BG_m$. Then a morphism $C\times S\to\cY$ is just a line bundle on $C\times S$ equipped with a section. So $\oMaps (C,\cY)$ is the scheme parametrizing effective divisors on $C$; in other words, $\oMaps (C,\cY)$ is the disjoint union of the symmetric powers $\Sym^N C$, $N\ge 0$.

\subsubsection{Important example} \label{sss:Zastava}
Let $G$ be a reductive group, $B\subset G$ a Borel subgroup, and $U$ its unipotent radical. Let $\cY:=\overline{U\setminus G}/B$, where 
$\overline{U\setminus G}$ is the affine closure of $U\setminus G$ (i.e., the spectrum of the ring of regular functions on $U\setminus G$). Then $\oMaps (C,\cY)$ is known to be representable by a scheme locally of finite type over $k$, which is called the \emph{open Zastava scheme}, see \cite{FM}. (``Zastava" is the Croatian for ``flag".)

\subsubsection{Remark} \label{sss:Delta}
Let $\cY\supset\Spec k$ be a pointy substack with the following property: there exists an effective Cartier divisor $\Delta\subset\cY$ such that $\cY\setminus\Delta =\Spec k$. Moreover, let us fix such $\Delta$.  Then for any morphism $f:C\times S\to\cY$ as in \S\ref{sss:oMaps} the subscheme  $f^{-1} (\Delta )\subset C\times S$ is an $S$-family of effective divisors on $C$, so we get a morphism from $\oMaps (C,\cY)$ to the disjoint union of the symmetric powers $\Sym^N C$, $N\ge 0$.  The preimage of $\Sym^N C$ in $\oMaps (C,\cY)$ will be denoted by $\Maps^{\circ}_N (C,\cY)$.

\subsection{The goal}   \label{ss:thegoal}

\subsubsection{The setting}   \label{sss:2the setting}
Let $X$ and $Y$ be  separated $k$-schemes locally of finite type and $\varphi :X\to Y$ be a $k$-morphism. Let $U\subset X$ be the locus where $\varphi$ is etale.
We assume that $Y$ is smooth, $X$ is a locally complete intersection\footnote{We do \emph{not} assume that $X$ is a relative locally complete intersection with respect to $\varphi$.}, and $U$ is dense in $X$. 

In this situation one defines the \emph{different}; this is a canonical effective Cartier divisor $\Delta_X\subset X$ such that $X\setminus\Delta_X=U$. Namely, 
$\Delta_X$ is the divisor associated by Knudsen-Mumford \cite{KM} to the relative cotangent sheaf $\Omega^1_{X/Y}$ (note that in our situation 
$\Omega^1_{X/Y}$ has homological dimension 1 and vanishes on $U$, so the construction from \cite{KM} is applicable).

The reader may prefer to focus on the following particular case.

\subsubsection{Particular case}   \label{sss:particular}
Let $X\subset\BA^{n+l}$ be as in \S\ref{sss:the setting}, $Y=\BA^n$, and $\varphi :X\to \BA^n$ the projection. Assume that $\varphi :X\to\BA^n$ is etale on a dense open subset of $X$. This assumption means that the subscheme $\Delta_X\subset X$ from \S\ref{sss:the setting} is a Cartier divisor; this is the different.

\subsubsection{The goal}  \label{sss:thegoal}
In the situation of  \S\ref{sss:2the setting} we will construct in \S\ref{s:Defining Newton} for each $r\ge 2$ a smooth groupoid $\Gamma_r$ acting on
$X$, which is called  the $r$th \emph{Newton groupoid} of $\varphi:X\to Y$. It has the following properties:

(i) the morphism $\Gamma_r\to X\times X$ is an isomorphism over $(X\setminus\Delta_X)\times (X\setminus\Delta_X)$, so the stack 
$X/\Gamma_r$ is pointy in the sense of \S\ref{sss:Pointy stacks};

(ii) the action of $\Gamma_r$ becomes the identity\footnote{By definition, this means that  the morphisms $p_1,p_2:\Gamma_r\to X$ have equal restrictions to $p_1^{-1}(\Delta_X)$.} when restricted to $\Delta_X$, so the stack $X/\Gamma_r$ has the property from \S\ref{sss:Delta} with $\Delta:=\Delta_X/\Gamma_r$;

(iii) in the situation of \S\ref{sss:particular} the functor $\Maps^{\circ}_N (\BA^1,X/\Gamma_r )$ (see \S\ref{sss:Delta})  is representable by an open subscheme of the scheme $Z_r$ from \S\ref{sss:Z as lim}; if $r\ge 3$ the open subscheme equals $Z_r$, and if $r=2$ it equals $\im (Z_3\to Z_2)$.

More properties of $\Gamma_r$ will be formulated in \S\ref{sss:LieGamma_r} and \S\ref{ss:r and r+1}-\ref{ss:the group scheme}.

\subsection{Relation to \cite{FM}}  \label{ss:FM}
Finkelberg and Mirkovi\'c \cite{FM} proved Theorem \ref{main} in the particular case that $X=\overline{U\setminus G}$, where $G$ and $U$ are as in 
\S\ref{sss:Zastava}. They did it by considering  $\oMaps (\BA^1,\overline{U\setminus G}/B )$. The proof of Theorem \ref{main} given in \S\ref{s:GK} or 
\S\ref{s:rephrasing} secretly uses a similar strategy, with the groupoid $\Gamma_r$ playing the role of $B$; this is clear from \S\ref{sss:thegoal}(iii).

\subsection{The Lie algebroid of $\Gamma_r$}  \label{ss:LieGamma_r$}
\subsubsection{The notion of Lie algebroid}
Let $X$ be a scheme locally of finite type over $k$ and $\Theta_X$ its tangent sheaf.

Recall that a \emph{Lie algebroid} on $X$ is a sheaf $\mathfrak a$ on $X$ equipped with an $\cO_X$-module structure, a Lie ring structure, and an  \emph{anchor map} $\tau :\fa\to\Theta_X$, which is supposed to be both an  $\cO_X$-module morphism and a Lie morphism; moreover, if $f$ is a regular function on an open subset $U\subset X$ and $v_1,v_2\in H^0(U,\mathfrak a )$ then one should have
\[
[v_1,fv_2]=f[v_1,v_2]+((\tau (v_1))(f))\cdot v_2.
\]

For instance, $\Theta_X$ is a Lie algebroid with the anchor map being the identity.

We say that a Lie algebroid $\mathfrak a$  on $X$ is \emph{locally free} if $\mathfrak a$ is a locally free coherent $\cO_X$-module.

\subsubsection{The Lie algebroid of a smooth groupoid} \label{sss:algebroid of groupoid}
Let $\Gamma$ be a smooth groupoid on $X$ and let $e:X\to\Gamma$ be its unit. Define $\Lie(\Gamma )$ to be the normal bundle of $X=e(X)\subset\Gamma$. It is well known that $\Lie(\Gamma )$ has a natural structure of locally free Lie algebroid on $X$ (e.g., see \cite{CF}). In particular, the anchor map 
$\tau :\Lie(\Gamma )\to\Theta_X$ is just the map from the normal bundle of $e(X)\subset\Gamma$ to the normal bundle of $X_{diag}\subset X\times X$ induced by the morphism $\Gamma\overset{(p_1,p_2)}\longrightarrow X\times X$.

\subsubsection{An example of Lie algebroid} \label{sss:example of Lie algebroid}
In the situation of \S\ref{sss:2the setting}  set $$\fa_r:=(\varphi^*\Theta_Y) (-r\Delta_X).$$ It is easy to see that if $r\ge 1$ then $\fa_r\subset\Theta_X$ and moreover, $\fa_r$ is a Lie subalgebroid of $\Theta_X$. It is clear that this Lie algebroid is locally free, and the restriction of its anchor map to $X\setminus\Delta_X$ is an isomorphism (this is parallel to \S\ref{sss:thegoal}(i)). Moreover, if $r\ge 2$ then the image of the anchor map of $\fa_r$ is contained in $\Theta_X (-\Delta_X)$ (this is parallel to \S\ref{sss:thegoal}(ii)).

\subsubsection{A key property of $\Gamma_r$}  \label{sss:LieGamma_r}
The groupoid $\Gamma_r$ that we will construct has the following property: the anchor map induces an isomorphism $\Lie (\Gamma_r )\iso\fa_r$.

\subsection{Relation between $\Gamma_r$ and $\Gamma_{r+1}$ }   \label{ss:r and r+1}
By \S\ref{sss:LieGamma_r}, we have $$\Lie (\Gamma_{r+1})=(\Lie (\Gamma_r))(-\Delta_X).$$
It turns out that in a certain sense,
\begin{equation}   \label{e:r and r+1}
\Gamma_{r+1}=\Gamma_r(-\Delta_X).
\end{equation}
The precise meaning of \eqref{e:r and r+1} is as follows. First, one has a morphism of groupoids $\Gamma_{r+1}\to\Gamma_r$ inducing the identity on $X$ (which is the scheme of objects for both groupoids). Second, for any scheme $S$ flat over $X$, the map 
$$\Mor_X (S,\Gamma_{r+1})\to \Mor_X (S,\Gamma_r)$$
is injective, and an $X$-morphism $f:S\to\Gamma_r$ belongs to its image if and only if 
the restriction of $f$ to $S\times_X\Delta_X$ is equal to the composition $$S\times_X\Delta_X\to X\overset{e}\longrightarrow\Gamma_r\, ,$$
where $e:X\to\Gamma_r$ is the unit. 

\subsection{The restriction of $\Gamma_r$ to $\Delta_X$}   \label{ss:the group scheme}
By \S\ref{sss:thegoal}(ii), $\Gamma_r\underset{X}\times\Delta_X$ is a smooth group scheme over $\Delta_X$. Let us describe its fiber $(\Gamma_r)_z$ over a point $z\in \Delta_X$.

Let $n:=\dim_zX$. Let $m$ be the multiplicity of $z$ in $X\times_Yz$ (i.e., in the fiber of $\varphi$ corresponding to $z$); then $m\in\BN\cup\{\infty\}$, and since $z\in\Delta_X$ we have $m\ge 2$.
In \S\ref{ss:smoothness} we will show that

(i) if either $r\ge 3$ or $m\ge 3$ then $(\Gamma_r)_z\simeq\BG_a^n$;

(ii) if $m=r=2$, $z$ is a nonsingular point of $X$, and the characteristic of $k$ is not $2$ then $(\Gamma_r)_z\simeq\BG_m\ltimes\BG_a^{n-1}$, where $\lambda\in\BG_m$ acts on  $\BG_a^{n-1}$ as multiplication by $\lambda^2$;

(iii) if $m=r=2$, $z$ is a nonsingular point of $X$, and $k$ has characteristic 2 then $(\Gamma_r)_z\simeq \BG_a^n$;

(iv) if $m=r=2$ and $z$ is a singular point of $X$ then $(\Gamma_r)_z\simeq (\BZ/2\BZ)\times\BG_a^n$.

\section{Newton groupoids (details)}   \label{s:Defining Newton}

Let $\varphi :X\to Y$ be as in \S\ref{sss:2the setting}. Just as in \S\ref{sss:2the setting}, let $\Delta_X\subset X$ be the different and let $U=X\setminus\Delta_X$.
We are going to define the groupoids $\Gamma_r$ on $X$, $r\ge 2$, which were promised in \S\ref{sss:thegoal}. 

\subsection{$\Gamma_r$ as a scheme over $X\times X$}  \label{sss:separated case}
This scheme will be obtained from $X\times X$ by a kind of ``affine blow-up".

Note that $X\times_YX$ and the diagonal $X_{diag}$ are closed subschemes of $X\times X$ (because $X$ and $Y$ are separated). Let $I_1\subset\cO_{X\times X}$ be the sheaf of ideals of $X_{diag}$. Let $I_2\subset\cO_{X\times X}$ be the sheaf of ideals of $X\times_YX$. Then $\cO_{X\times X}\supset I_1\supset I_2$.

Let $D:=\Delta_X\times X$, $\tilde D:=X\times\Delta_X$; then $D$ and $\tilde D$ are effective Cartier divisors on $X\times X$ and 
$(X\times X)\setminus (D\cup \tilde D)=U\times U$. Let $j:U\times U\hookrightarrow X\times X$ be the open immersion.

We define $\cA_r\subset j_*\cO_{U\times U}$ to be the $\cO_{X\times X}$-subalgebra generated by $I_1((r-1)D)$, $I_2(rD)$, $I_1((r-1)\tilde D)$, $I_2(r\tilde D)$.

Finally, we set $\Gamma_r^{\varphi}=\Spec\cA_r$; this is a scheme affine over $X\times X$. Usually we write $\Gamma_r$ instead of $\Gamma_r^{\varphi}$.

\begin{lem}   \label{l:easy}
(i) The morphism $\Gamma_r\to X\times X$ is an isomorphism over $U\times U$. 

(ii) The morphisms $p_1,p_2:\Gamma_r\to X$ have equal restrictions to $p_1^{-1}(\Delta_X)$. Similarly, 
$p_1|_{p_2^{-1}(\Delta_X)}=p_2|_{p_2^{-1}(\Delta_X)}$.

(iii) Let $X'\subset X$ and $Y'\subset Y$ be open subschemes such that $\varphi (X')\subset Y'$. Let $\varphi':X'\to Y'$ be induced by $\varphi :X\to Y$. Then 
$\Gamma_r^{\varphi'}$ is obtained from $\Gamma_r^{\varphi}$ by base change $X'\times X'\to X\times X$.

(iv) $\Gamma_r^{\varphi}$ is not changed if $\varphi :X\to Y$ is composed with an etale morphism $Y\to\tilde Y$.
\end{lem}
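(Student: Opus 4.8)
The plan is to work throughout with the sheaf of $\cO_{X\times X}$-algebras $\cA_r\subset j_*\cO_{U\times U}$ itself, reducing each of the four assertions to a local computation with the four generators. Write $t$ (resp.\ $\tilde t$) for a local equation of the Cartier divisor $D$ (resp.\ $\tilde D$); since $r\ge 2$ these enter only through the pole orders $t^{-(r-1)},t^{-r}$ and $\tilde t^{-(r-1)},\tilde t^{-r}$.

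For \emph{(i)} I observe that $D\cap(U\times U)=\tilde D\cap(U\times U)=\emptyset$, so over $U\times U$ each generator is a subsheaf of $\cO_{U\times U}=j_*\cO_{U\times U}|_{U\times U}$ containing $\cO_{U\times U}$; hence $\cA_r|_{U\times U}=\cO_{U\times U}$ and $\Gamma_r\to X\times X$ is an isomorphism there. For \emph{(ii)} the point is that $r-1\ge 1$. The inclusion $I_1((r-1)D)\subseteq\cA_r$ says that for every local section $s$ of $I_1$ the product $s\,t^{-(r-1)}$ lies in $\cA_r$, so the image of $s$ in $\cA_r$ equals $t^{r-1}\cdot(s\,t^{-(r-1)})\in t^{r-1}\cA_r\subseteq t\cA_r$. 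Thus $I_1\cA_r\subseteq t\cA_r$, and $t\cA_r$ is exactly the ideal of $p_1^{-1}(\Delta_X)$ in $\Gamma_r$. Since $p_1^*f-p_2^*f$ is a section of $I_1$ for every regular function $f$ on $X$, its image vanishes on $p_1^{-1}(\Delta_X)$; hence $p_1=p_2$ there. The second equality in (ii) follows by the symmetry of the construction under interchanging the two factors of $X\times X$ (which swaps $D\leftrightarrow\tilde D$ and fixes $I_1$), using $I_1((r-1)\tilde D)\subseteq\cA_r$ instead.

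For \emph{(iii)} I would check that all the data entering the construction commute with restriction to the open subscheme $X'\times X'\subseteq X\times X$. Indeed $U'=U\cap X'$ (being etale is local), $\Delta_{X'}=\Delta_X\cap X'$ (because $\Omega^1_{X'/Y'}=\Omega^1_{X/Y}|_{X'}$ and the Knudsen--Mumford divisor localizes), and $X'\times_{Y'}X'=(X\times_YX)\cap(X'\times X')$, so $I_1,I_2,D,\tilde D$ all restrict to the corresponding objects for $\varphi'$. Since an open immersion is flat, restriction to $X'\times X'$ is exact, commutes with $j_*$ along the open immersion $j$ (because $U'\times U'=(U\times U)\cap(X'\times X')$), and commutes with the operation ``$\cO$-subalgebra generated by''. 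Hence $\cA_r^{\varphi'}=\cA_r^{\varphi}|_{X'\times X'}$, which is the asserted base change.

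The main work is \emph{(iv)}, and it is the step I expect to be the real obstacle. Composing $\varphi$ with an etale $g:Y\to\tilde Y$ leaves $U$, $\Delta_X$, $D$, $\tilde D$, and $I_1$ unchanged (for $\Delta_X$ one uses $\Omega^1_{X/Y}\iso\Omega^1_{X/\tilde Y}$, which holds because $\Omega^1_{Y/\tilde Y}=0$); the only generator that changes is $I_2$, which becomes the ideal $I_2'$ of $X\times_{\tilde Y}X$ in place of the ideal $I_2$ of $X\times_YX$. Because $g$ is unramified, its diagonal is an open immersion, so $X\times_YX$ is an \emph{open} subscheme of $X\times_{\tilde Y}X$; as both are closed in $X\times X$, the two ideals agree on an open $W\supseteq X_{diag}$, while on the closed complement $F:=(X\times_{\tilde Y}X)\setminus(X\times_YX)$ one has $I_2=\cO_{X\times X}$. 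Since $I_2'\subseteq I_2$ we get $\cA_r^{\tilde\varphi}\subseteq\cA_r^{\varphi}$ automatically, so it remains to show that the extra generators $I_2(rD)$ and $I_2(r\tilde D)$ of $\cA_r^{\varphi}$ lie in $\cA_r^{\tilde\varphi}$. On $W$ there is nothing to prove; near $F$ one has $I_2(rD)=\cO(rD)$, and the crucial observation is that $F$ is disjoint from $X_{diag}$ (the diagonal lies in $X\times_YX$), so there $I_1=\cO_{X\times X}$ and the \emph{unchanged} generator $I_1((r-1)D)$ already contributes $t^{-(r-1)}$ to $\cA_r^{\tilde\varphi}$. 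Using $r\ge 2$ one has $t^{-r}=t^{\,r-2}\cdot\bigl(t^{-(r-1)}\bigr)^2$, so $\cO(rD)$ is generated as an algebra by $\cO((r-1)D)$ together with $\cO_{X\times X}$; hence $I_2(rD)\subseteq\cA_r^{\tilde\varphi}$ near $F$, and symmetrically for $I_2(r\tilde D)$. This yields $\cA_r^{\varphi}\subseteq\cA_r^{\tilde\varphi}$ and therefore equality. The delicate point throughout is precisely this interplay on $F$ between the pole order allowed by $I_2(rD)$ and the pole order $t^{-(r-1)}$ already available from the diagonal generator; it is here, and only here, that the hypothesis $r\ge 2$ is essential.
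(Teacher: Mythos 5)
Your proof is correct and follows essentially the same route as the paper's (very terse) proof: your argument for (ii) is exactly the paper's inclusions $I_1\subset\cA_r(-D)$ and $I_1\subset\cA_r(-\tilde D)$, and your (iv) rests on the same key fact, namely that etaleness of $Y\to\tilde Y$ makes $X\times_YX$ open in $X\times_{\tilde Y}X$, so the two ideals agree near $X_{diag}$. The only cosmetic difference is in (iv): away from $X\times_YX$ you extract the pole $t^{-r}$ from the generator $I_1((r-1)D)$ via $t^{-r}=t^{r-2}\bigl(t^{-(r-1)}\bigr)^2$, whereas the paper invokes (ii) (i.e.\ $I_1(D)\subset\cA_r$, which on that locus gives $t^{-1}$ and hence $t^{-r}$); both steps are trivially equivalent, and note that $r\ge 2$ is equally essential in your step for (ii), not only there.
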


\begin{proof}
Checking (i) and (iii) is straightforward. Statement (ii) follows from the inclusions $I_1\subset \cA_r(-D)$ and 
$I_1\subset \cA_r(-\tilde D)$.
To prove (iv), use (ii) and the fact that $X\times_YX$ and $X\times_{\tilde Y}X$ are equal in a neighborhood of $X_{diag}$ (because the morphism $Y\to\tilde Y$ is etale).
\end{proof}

\subsection{$\Gamma_r$ as a groupoid}
In \S~\ref{ss:flatness} we will prove that the two morphisms $\Gamma_r\to X$ are flat. Assuming this fact, we prove
\begin{prop}
There is a unique way to make $\Gamma_r\to X\times X$ into a groupoid on~$X$. 
\end{prop}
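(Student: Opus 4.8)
The plan is to construct the structure maps $e$, $i$, $c$ of the groupoid directly on the level of the algebras $\cA_r$, exploiting the fact that everything is determined by its restriction to the dense open $U\times U$, where $\Gamma_r$ agrees with $X\times X$ by Lemma~\ref{l:easy}(i). On $U\times U$ the trivial pair groupoid structure is already present: the unit is the diagonal $U\to U\times U$, the inversion is the swap $\sigma(u_1,u_2)=(u_2,u_1)$, and the composition $c$ comes from the pair-groupoid composition on $U\times U\times_U U\times U$. The entire content of the proposition is therefore that each of these three maps, a priori defined only over $U$, extends (necessarily uniquely) to a morphism of the affine $X$-schemes $\Gamma_r$, and that once extended they automatically satisfy the groupoid identities. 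Uniqueness of each extension is immediate because $\Gamma_r$ is separated and reduced-to-$U$ in the sense that $\cA_r\hookrightarrow j_*\cO_{U\times U}$ is injective; so the identities, being equalities of maps that hold over the dense open $U$, hold everywhere once the maps exist.

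For the unit $e:X\to\Gamma_r$, I would check that the composite $X\to X\times X\to j_*\cO_{U\times U}$ along the diagonal kills every generator of $\cA_r$ beyond $\cO_{X\times X}$: the diagonal is $X_{diag}$, cut out by $I_1$, so the sections of $I_1((r-1)D)$ and $I_2(rD)$ (and their $\tilde D$-counterparts) restrict compatibly; concretely this is exactly the inclusions $I_1\subset\cA_r(-D)$ and $I_1\subset\cA_r(-\tilde D)$ already used in Lemma~\ref{l:easy}(ii), which show the diagonal factors through $\Gamma_r$. For the inversion $i$, the swap $\sigma$ on $X\times X$ interchanges $D\leftrightarrow\tilde D$ and $I_1\to I_1$, $I_2\to I_2$ (since $X\times_Y X$ and $X_{diag}$ are symmetric), hence $\sigma^*$ permutes the four generators of $\cA_r$ and so $\sigma$ lifts to an involution $i:\Gamma_r\iso\Gamma_r$ interchanging $p_1,p_2$.

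The composition $c$ is the main obstacle, and it is where flatness of $p_1,p_2:\Gamma_r\to X$ is essential. The source of $c$ is $\Gamma_r\times_{p_2,X,p_1}\Gamma_r$, and to even speak of its coordinate ring as a subring of functions on a dense open I need this fiber product to be reduced/irreducible with dense open $U\times_U U\times_U U=U\times U$-type locus; flatness of the two projections guarantees that $\Gamma_r\times_X\Gamma_r$ is flat over $X$ and that the open locus lying over $U\times U\times U$ is dense, so that a morphism defined there extends uniquely if it extends at all. The substance is then to verify that the pair-groupoid composition map, written in terms of the generating sections, sends $\cA_r$ into the structure sheaf of the fiber product: one checks that the multiplication of a section of $I_1((r-1)D)$ "from the first factor" with the analogous section "from the second factor" lands in the subalgebra generated by the $I_1((r-1)D_{13})$, $I_2(rD_{13})$ attached to the outer pair of points. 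This is a local computation on divisor multiplicities along the different $\Delta_X$; the Lie-algebroid statement of \S\ref{sss:LieGamma_r}, $\Lie(\Gamma_r)\cong\fa_r=(\varphi^*\Theta_Y)(-r\Delta_X)$, is the infinitesimal shadow of exactly this multiplicativity and serves as the guide for which divisor bounds must hold.

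Finally, having produced $e,i,c$ as morphisms of $X$-schemes, I would note that the associativity, unit, and inverse axioms are equalities of morphisms into the separated scheme $\Gamma_r$ (resp.\ $\Gamma_r\times_X\Gamma_r$) that hold after restriction to the dense open locus sitting over $U$, where $\Gamma_r$ is the honest pair groupoid; by density and separatedness they hold identically, which yields the groupoid structure and simultaneously its uniqueness.
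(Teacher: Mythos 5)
Your proposal follows the same skeleton as the paper's proof: schematic density of $U\times U$ in $\Gamma_r$ (and, via flatness of the two projections $\Gamma_r\to X$, of $U\times U\times U$ and $U\times U\times U\times U$ in the fiber products) gives uniqueness of all structure maps and makes the groupoid identities automatic once the maps exist; the unit and the inversion are obtained by lifting the diagonal and the swap of factors. All of that matches the paper and is sound. But at the step you yourself call the main obstacle --- \emph{existence} of the composition law --- there is a genuine gap, and the check you propose goes in the wrong direction. To produce $c:\Gamma_r\times_X\Gamma_r\to\Gamma_r$ one must show that the pullback under $p_{13}$ of each generator of $\cA_r$ lies in the coordinate ring of $\Gamma_r\times_X\Gamma_r$, i.e.\ in the subalgebra of functions on $U\times U\times U$ generated by $p_{12}^*\cA_r$ and $p_{23}^*\cA_r$. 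What you propose to verify is the opposite containment: that products of sections ``from the first factor'' and ``from the second factor'' land in the subalgebra attached to the outer pair $p_{13}$. That inclusion is not what is needed, and it cannot hold in general (the fiber product $\Gamma_r\times_X\Gamma_r$ has strictly larger dimension than $\Gamma_r$, so its coordinate ring cannot be generated over $\cO_{X^3}$ by $p_{13}^*\cA_r$). Note also that the composition is \emph{additive} in the generating sections --- in the explicit coordinates of \S\ref{sss:composition law} it reads $\xi\mapsto\xi_{12}+v^r\xi_{23}$ with $v$ a unit --- so ``multiplication of sections'' is not the relevant operation at all.

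The actual mechanism, which your sketch never identifies, is the pair of telescoping inclusions $p_{13}^*I_1\subset p_{12}^*I_1+p_{23}^*I_1$ and $p_{13}^*I_2\subset p_{12}^*I_2+p_{23}^*I_2$ (for $I_1$ this is just $g(x_1)-g(x_3)=(g(x_1)-g(x_2))+(g(x_2)-g(x_3))$, and similarly for $I_2$ using functions pulled back from $Y$), combined with the fact that on the fiber product the three pullbacks of a local equation of $\Delta_X$ agree up to units --- a consequence of Lemma~\ref{l:easy}(ii), which makes $Q(\tilde x,\tilde y)/Q(x,y)$ invertible on $\Gamma_r$. These two ingredients show that $p_{13}^*$ of each generator $I_1((r-1)D)$, $I_2(rD)$ (and the $\tilde D$-counterparts) extends to $\Gamma_r\times_X\Gamma_r$, which is exactly the existence of \eqref{e:composition law}. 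Your substitute --- ``a local computation on divisor multiplicities'' guided by the isomorphism $\Lie(\Gamma_r)\simeq\fa_r$ --- is not a proof; worse, in the logical order of the paper that Lie algebroid statement is only established \emph{after} the groupoid structure exists, so invoking it here would be circular. Without the telescoping inclusions, the heart of the proposition remains unproved.
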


$\Gamma_r=\Gamma_r^\varphi$ is called  the $r$th \emph{Newton groupoid} of $\varphi :X\to Y$.

\begin{proof}
Because of Lemma~\ref{l:easy}(ii), we can assume that $X$ and $Y$ are affine.

By Lemma~\ref{l:easy}(i), we have an open embedding $U\times U\mono\Gamma_r$. Its image is sche\-ma\-tically dense in $\Gamma_r$ (this is clear from the definition of $\Gamma_r$). Moreover, using flatness of the two morphisms $\Gamma_r\to X$, we see that the open embeddings
\[
U\times U\times U\mono\Gamma_r\times_X\Gamma_r, \quad
U\times U\times U\times U\mono\Gamma_r\times_X\Gamma_r\times_X\Gamma_r
\]
also have schematically dense images. So there is at most one morphism 
\begin{equation}  \label{e:composition law}
\Gamma_r\times_X\Gamma_r\to\Gamma_r
\end{equation}
 over $X\times X$, and if it exists it automatically has the associativity property.

 Let us check that \eqref{e:composition law} exists. Recall that $\Gamma_r:=\Spec\cA_r$. The ideals $I_1,I_2\subset\cO_{X\times X}$ that were used in the definition of $\cA_r$ from  \S\ref{sss:separated case} have the following properties:
 \[
 p_{13}^*I_1\subset  p_{12}^*I_1+p_{23}^*I_1,\quad  p_{13}^*I_2\subset  p_{12}^*I_2+p_{23}^*I_2\, ,
 \]
 where $p_{12}$, $p_{13}$, $p_{23}$ are the three projections $X^3\to X^2$. 
 This implies that if $f$ is a  regular function on $\Gamma_r$ then its pullback with respect to the composed morphism
 \[
 U\times U\times U\overset{p_{13}}\longrightarrow U\times U\mono\Gamma_r
 \]
 extends to $\Gamma_r\times_X\Gamma_r$. This proves the existence of \eqref{e:composition law}.
 
 We also need the unit and the inversion map for $\Gamma_r$. 
 The automorphism of $X\times X$ that takes $(x_1,x_2)$ to $(x_2,x_1)$ clearly has a unique lift to an isomorphism $i:\Gamma_r\iso\Gamma_r$. 
 The diagonal embedding $X\to X\times X$ has a unique lift to a morphism $e:X\to\Gamma_r$ (to prove its existence, one checks that if $f$ is any regular function on $\Gamma_X$ then the restriction of $f$ to $U_{diag}\subset U\times U\subset\Gamma_r$ extends to $X_{diag}$). Then $e$ is the unit and $i$ is the inversion map for $\Gamma_r$ (it suffices to check the required identities on schematically dense open subschemes).
\end{proof}

\subsection{Relation between $\Gamma_r$ and $\Gamma_{r+1}$ }   \label{ss:again r and r+1}
In this subsection (which is not used in the rest of \S\ref{s:Defining Newton}) we verify the claim  of \S\ref{ss:r and r+1}.

\begin{prop}  \label{p:r+1 to r}
(i) There is a unique morphism of $(X\times X)$-schemes $\Gamma_{r+1}\to\Gamma_r$. Moreover, it is a morphism of groupoids.

(ii) The corresponding morphism 
$\Gamma_{r+1}\underset{X\times X}\times (\Delta_X\times\Delta_X) \to \Gamma_{r}\underset{X\times X}\times (\Delta_X\times\Delta_X)$ 
is trivial, i.e., it is equal to the composition 
$$\Gamma_{r+1}\underset{X\times X}\times (\Delta_X\times\Delta_X)\overset{p_i}\longrightarrow \Delta_X\overset{e}\longrightarrow \Gamma_{r}\underset{X\times X}\times (\Delta_X\times\Delta_X),$$
where $e$ is the unit section and $i$ equals $1$ or $2$.

(iii) The affine morphism $\Gamma_{r+1}\to\Gamma_r$  can be described as follows.\footnote{This description means that $\Gamma_{r+1}$ is obtained from $\Gamma_r$ by blowing up $e(\Delta_X)$ and then removing the strict transform of $\Delta_r$.} Define $\Delta_r\subset\Gamma_r$ to be the preimage of the divisor $\Delta_X$ with respect to $p_1:\Gamma_r\to X$ (or equivalently, with respect to $p_2:\Gamma_r\to X$). Let $\nu :U\times U\to\Gamma_r$ be the open immersion. Let $\cB\subset \nu_*\cO_{U\times U}$ be the quasi-coherent  unital $\cO_{\Gamma_r}$-algebra generated by $I(\Delta_r)$, where $I\subset\cO_{\Gamma_r}$ is the ideal of $e(\Delta_X )$. Then $\Gamma_{r+1}=\Spec\cB$.

(iv) for any scheme $S$ flat over $X$, the map 
\begin{equation}
\Mor_X (S,\Gamma_{r+1})\to \Mor_X (S,\Gamma_r)
\end{equation}
is injective, and an $X$-morphism $f:S\to\Gamma_r$ belongs to its image if and only if the restriction of $f$ to $S\times_X\Delta_X$ is equal to the composition $$S\times_X\Delta_X\to\Delta_X\overset{e}\longrightarrow\Gamma_r\, ,$$
where $e$ is the unit. 
\end{prop}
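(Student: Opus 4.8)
The plan is to reduce everything to a single computation identifying the algebra $\cA_{r+1}$ with an affine modification of $\cA_r$, after which (i), (ii) and (iv) follow formally. Since all the statements are local on $X\times X$, by Lemma~\ref{l:easy}(iii) I may assume $X$ and $Y$ affine; let $Q$ be a local equation of the different $\Delta_X$, and write $Q,Q'$ for its pullbacks under $p_1,p_2$, so that $D$ and $\tilde D$ are cut out by $Q$ and $Q'$. For (i), comparing generators inside $j_*\cO_{U\times U}$, the inclusions $I_1((r-1)D)\subset I_1(rD)$, $I_2(rD)\subset I_2((r+1)D)$ and their $\tilde D$-analogues show $\cA_r\subset\cA_{r+1}$, which is the desired $(X\times X)$-morphism $\Gamma_{r+1}\to\Gamma_r$. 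Uniqueness is immediate, since any $\cO_{X\times X}$-algebra map $\cA_r\to\cA_{r+1}$ is the identity on the schematically dense open $U\times U$ and hence coincides with the inclusion; the same schematic-density argument on the products $\Gamma_r\times_X\cdots\times_X\Gamma_r$ (dense by flatness of the two projections $\Gamma_r\to X$, cf. \S\ref{ss:flatness}) forces compatibility with composition, unit and inversion, so it is a morphism of groupoids.

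The heart of the matter is (iii). The key point is that $D$ and $\tilde D$ restrict to the \emph{same} Cartier divisor $\Delta_r$ on $\Gamma_r$, i.e. $Q/Q'\in\cA_r^\times$. Indeed $Q-Q'$ vanishes on the diagonal, so $Q-Q'\in I_1$; the generators $I_1((r-1)D)$ and $I_1((r-1)\tilde D)$ give $I_1\subset Q^{r-1}\cA_r\cap(Q')^{r-1}\cA_r$, whence (as $r\ge 2$) both $Q'/Q=1-Q^{r-2}a$ and $Q/Q'=1+(Q')^{r-2}b$ lie in $\cA_r$; their product being $1$, each is a unit. Next I identify the ideal $J\subset\cA_r$ of $e(X)$ with the augmentation ideal generated by $I_1((r-1)D)$, $I_2(rD)$, $I_1((r-1)\tilde D)$, $I_2(r\tilde D)$: these generators lie in $J$ because they vanish on the dense $U_{diag}$ and $X$ is reduced (being lci and generically reduced), while $J\supset I_1$ exhibits $\cA_r/J$ as a quotient of $\cO_X$, and the induced surjection $\cA_r/J\to\cO_X$ is an isomorphism. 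Since $e(\Delta_X)=e(X)\cap\Delta_r$ is a transverse intersection (the ideal of $\Delta_r$ restricts to the nonzerodivisor $Q$ on $e(X)\cong X$), the ideal $I$ of $e(\Delta_X)$ equals $J+Q\cA_r$, so $I(\Delta_r)=\cA_r+Q^{-1}J$ and $\cB=\cA_r[Q^{-1}J]$. Multiplying the generators of $J$ by $Q^{-1}$ turns $I_1((r-1)D)$, $I_2(rD)$ into $I_1(rD)$, $I_2((r+1)D)$, and, using $Q/Q'\in\cA_r^\times$, turns $I_1((r-1)\tilde D)$, $I_2(r\tilde D)$ into $I_1(r\tilde D)$, $I_2((r+1)\tilde D)$; these are exactly the four generators of $\cA_{r+1}$, so $\cB=\cA_{r+1}$, proving (iii).

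Finally, (ii) and (iv) are formal consequences of $\cB=\cA_r[Q^{-1}J]$. This identity gives $J\cdot\cB\subset Q\cB$, so $J$ dies modulo $Q$; hence over the divisor $\Delta_{r+1}=p_1^{-1}(\Delta_X)$ on $\Gamma_{r+1}$ the map to $\Gamma_r$ factors through the center $e(\Delta_X)$, and since $\Gamma_{r+1}\times_{X\times X}(\Delta_X\times\Delta_X)$ lies inside $\Delta_{r+1}$ this yields (ii). For (iv), an $X$-morphism $f:S\to\Gamma_r$ lifts to $\Spec\cB$ precisely when the fractions $Q^{-1}J$ pull back into $\cO_S$, i.e. when $f^*J\subset(f^*Q)$; flatness of $S$ over $X$ makes $f^*Q$, the equation of $S\times_X\Delta_X$, a nonzerodivisor, so this inclusion is equivalent to the vanishing of $f^*J$ on $S\times_X\Delta_X$, i.e. to $f|_{S\times_X\Delta_X}$ factoring through $e$, and it also makes the lift unique, giving injectivity. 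The main obstacle is (iii): recognizing and proving $Q/Q'\in\cA_r^\times$ in the coordinate-free setting of the general different, and pinning down the ideal $J$ of the unit section; once these are in hand the remaining identifications are bookkeeping.
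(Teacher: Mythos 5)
Your proof is correct, and it uses the same basic objects as the paper (the algebras $\cA_r\subset j_*\cO_{U\times U}$, the ideal of the unit section, Lemma~\ref{l:easy}), but the logical organization is genuinely different: the deduction between (ii) and (iii) runs in the opposite direction. The paper checks (ii) first by a direct computation (left as ``checked straightforwardly''), and then uses it for the harder inclusion in (iii): restated algebraically, (ii) says $J\cA_{r+1}=\cA_{r+1}(-D)$ for the ideal $J$ of $e(\Delta_X)$, whence $J(D)\subset\cA_{r+1}$ and $\cC\subset\cA_{r+1}$, while the easy inclusion $\cA_{r+1}\subset\cC$ comes from $J\supset I_1((r-1)D)+I_2(rD)+I_1((r-1)\tilde D)+I_2(r\tilde D)$. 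You instead prove (iii) outright, by pinning down the ideal of $e(X)$ as the ideal generated by those four modules and by proving $Q/Q'\in\cA_r^\times$ directly from $Q-Q'\in I_1\subset Q^{r-1}\cA_r\cap (Q')^{r-1}\cA_r$, and you then obtain both (ii) and (iv) as formal corollaries of the single identity $\cA_{r+1}=\cA_r[Q^{-1}J]$; there is no circularity, since your argument for (iii) nowhere invokes (ii). Your route buys two things the paper leaves implicit: a proof of the parenthetical claim in (iii) that $\Delta_r$ can equivalently be defined via $p_1$ or $p_2$ (this is exactly $Q/Q'\in\cA_r^\times$, which the paper extracts from Lemma~\ref{l:easy}(ii)), and an explicit description of the ideal of the unit section; what the paper's order buys is brevity, since once (ii) is granted both inclusions in (iii) are one-liners. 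Two small expository blemishes, neither a gap: your symbol $J$ drifts between ``the ideal of $e(X)$'' and ``the ideal generated by the four modules'' (your argument shows these coincide, but you should say so), and the appeal to a ``transverse intersection'' is unnecessary --- the equality $I=J+Q\cA_r$ follows simply because $e^*:\cA_r\to\cO_X$ is surjective with kernel $J$ and sends $Q$ to a generator of the ideal of $\Delta_X\subset X$.
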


\begin{proof}
Statement (i) is clear: the morphism $\Gamma_{r+1}\to\Gamma_r$ comes from the obvious inclusion $\cA_r\subset\cA_{r+1}$. 

Statement (ii) essentially says that the composed map 
$$\cA_r\mono\cA_{r+1}\epi\cA_{r+1}\underset{\cO_{X\times X}}\otimes\cO_D$$
is equal to the composed map $\cA_r\overset{e^*}\longrightarrow\cO_{X_{diag}}\to\cA_{r+1}\underset{\cO_{X\times X}}\otimes\cO_D$. This is checked straightforwardly.

Let us prove (iii). Let $J\subset\cA_r$ be the ideal corresponding to the closed subscheme $e(\Delta_X)\subset\Gamma_r$. By Lemma~\ref{l:easy}(ii), 
$\cA_r(-D)=\cA_r(-\tilde D)$, so $\cA_r(D)=\cA_r(\tilde D)$ and $J(D)=J(\tilde D)$. Statement (iii) essentially says that
\begin{equation}  \label{e: paraphrase of iii}
\cA_{r+1}=\cC,
\end{equation}
where  $\cC\subset j_*\cO_{U\times U}$ is the $\cA_r$-subalgebra generated by $J(D)=J(\tilde D)$. Let us prove~\eqref{e: paraphrase of iii}. Since $J$ contains $I_1((r-1)D)$, $I_2(rD)$, $I_1((r-1)\tilde D)$, and $I_2(r\tilde D)$, we see that $\cA_{r+1}\subset\cC$. On the other hand, statement (ii) means that 
$J\cA_{r+1}=\cA_{r+1}(-D)$, so $J(D)\subset\cA_{r+1}$ and therefore $\cC\subset\cA_{r+1}$. This proves \eqref{e: paraphrase of iii} and statement (iii).

Statement (iv) follows from (iii).
\end{proof}

 \subsection{Flatness of $\Gamma_r$}    \label{ss:flatness}
 Let us prove that the two morphisms $\Gamma_r\to X$ are flat. (Later we will show that they are smooth, see Proposition~\ref{p:smoothness}). By Lemma~\ref{l:easy}, it is enough to consider the situation of \S\ref{sss:particular}. In this situation we will give a very explicit description of $\Gamma_r$ (see Lemma~\ref{l:theembedding} below).

We use the notation of $\S\ref{sss:the setting}$, and we assume that the projection $\varphi :X\to\BA^n=Y$ is generically etale (i.e., the restriction of $Q$ to $X$ is not a zero divisor).  Let $U\subset X$ be the locus $Q\ne 0$. 

Recall that $X\subset\BA^{n+l}$, and the coordinates in $\BA^{n+l}$ are denoted by $$x_1,\ldots ,x_n, y_1,\ldots y_l.$$ We have $X\times X\subset\BA^{n+l}\times\BA^{n+l}$. The coordinates in $\BA^{n+l}\times\BA^{n+l}$ will be denoted by $$x_i,y_j,\tilde x_i,\tilde y_j, \quad 1\le i\le n, \; 1\le j\le l.$$
By definition, $\Gamma_r:=\Spec A$, where $A\subset H^0(U\times U)$ is the subalgebra generated by all regular functions on $X\times X$ and also the following ones:
\begin{equation}   \label{e:xi}
\xi_i:=\frac{\tilde x_i-x_i}{Q(x,y)^r}, \quad \eta_j:=\frac{\tilde y_j-y_j}{Q(x,y)^{r-1}}\, ,
\end{equation}
\begin{equation}   \label{e:tilde xi}
\tilde\xi_i:=\frac{\tilde x_i-x_i}{Q(\tilde x,\tilde y)^r}, \quad \tilde\eta_j:=\frac{\tilde y_j-y_j}{Q(\tilde x,\tilde y)^{r-1}}\, .
\end{equation}

Let us now give an explicit description of the scheme $\Gamma_r$. Consider the morphism
\begin{equation}  \label{e:theembedding}
\Gamma_r\to X\times\BA^{n+l},
\end{equation}
where the map $\Gamma_r\to X$ is given by $x_i$'s and $y_j$'s, and the  map $\Gamma_r\to \BA^{n+l}$ is given by $\xi_i$'s and $\eta_j$'s.

\begin{lem}  \label{l:theembedding}
(i) The morphism \eqref{e:theembedding} identifies $\Gamma_r$ with the locally closed subscheme $\Gamma'_r\subset X\times\BA^{n+l}$ defined by the equation
\begin{equation}  \label{e:theequation}
\eta+\hat{C}(x,y)u(x,y,\xi ,\eta)=0 
\end{equation}
(which is a system of $l$ scalar equations) and the inequality
\begin{equation}  \label{e:inequality}
v(x,y,\xi ,\eta )\ne 0,
\end{equation}
where $C(x,y)$ is the matrix $\frac{\partial f}{\partial y}$, $\hat{C}(x,y)$ is the matrix adjugate to $C(x,y)$, and
\[
u(x,y,\xi ,\eta):=\frac{f(x+Q(x,y)^r\xi,y+Q(x,y)^{r-1}\eta )-f(x,y)-Q(x,y)^{r-1}C(x,y)\eta}{Q(x,y)^r},  
\]
\begin{equation}  \label{e:v}
v(x,y,\xi ,\eta ):=\frac{Q(x+Q(x,y)^r\xi,y+Q(x,y)^{r-1}\eta )}{Q(x,y)}\, .
\end{equation}  

(ii) The morphism $\Gamma_r\to X$ given by $x_i$'s and $y_j$'s is flat.
\end{lem}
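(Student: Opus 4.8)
The plan is to prove both statements simultaneously by giving a completely explicit presentation of the algebra $A$. First I would treat part (i). The morphism \eqref{e:theembedding} is defined by the functions $x_i,y_j,\xi_i,\eta_j$, so I need to identify exactly which relations these functions satisfy. The key observation is that the functions $\tilde x_i,\tilde y_j,\tilde\xi_i,\tilde\eta_j$ must all be expressible as regular functions on $X\times\BA^{n+l}$ via the formulas \eqref{e:xi}--\eqref{e:tilde xi}. Indeed, from \eqref{e:xi} one has $\tilde x_i=x_i+Q(x,y)^r\xi_i$ and $\tilde y_j=y_j+Q(x,y)^{r-1}\eta_j$, which already exhibits $\tilde x,\tilde y$ as polynomials in the chosen coordinates; so the morphism \eqref{e:theembedding} is automatically a \emph{closed} immersion onto its scheme-theoretic image, once we restrict to the open locus where $\xi,\eta$ are defined. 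The content of (i) is therefore to determine that image precisely.

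The equation \eqref{e:theequation} is forced as follows. Since $(\tilde x,\tilde y)\in X$, we have $f(\tilde x,\tilde y)=0$, i.e. $f(x+Q^r\xi,\,y+Q^{r-1}\eta)=0$, where I abbreviate $Q:=Q(x,y)$. Using the defining formula for $u$, this reads $Q^r u(x,y,\xi,\eta)+f(x,y)+Q^{r-1}C(x,y)\eta=0$; since $(x,y)\in X$ gives $f(x,y)=0$, and $Q^{r-1}$ is (generically) a nonzerodivisor, I divide by $Q^{r-1}$ to obtain $C(x,y)\eta+Qu=0$. Multiplying by the adjugate $\hat C(x,y)$ and using $\hat C C=Q\cdot\mathrm{Id}$ then yields $Q\eta+Q\hat C u=0$, i.e. \eqref{e:theequation} after cancelling $Q$. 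The inequality \eqref{e:inequality} records that $(\tilde x,\tilde y)\in U$, i.e. $Q(\tilde x,\tilde y)\ne 0$: comparing with \eqref{e:v} this is exactly $v\ne 0$, and it is precisely this invertibility that makes $\tilde\xi,\tilde\eta$ regular (the denominators $Q(\tilde x,\tilde y)$ in \eqref{e:tilde xi} become invertible), so that $A$ indeed contains no more than the functions $x,y,\xi,\eta$ subject to \eqref{e:theequation}. I would then verify the reverse inclusion: on the locally closed subscheme $\Gamma'_r$ cut out by \eqref{e:theequation} and \eqref{e:inequality}, the point $(\tilde x,\tilde y)$ lies in $U$ and all the generators \eqref{e:xi}--\eqref{e:tilde xi} of $A$ are regular functions pulled back from $X\times\BA^{n+l}$, giving a two-sided identification of coordinate rings.

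For part (ii), flatness of $\Gamma_r\to X$ over the first factor, I would argue fiberwise over $X$. Fixing a point $(x,y)\in X$, the equations \eqref{e:theequation} in the unknowns $(\xi,\eta)\in\BA^{n+l}$ can be solved for $\eta$ in terms of $\xi$: equation \eqref{e:theequation} together with the expansion $u=\frac{\partial f}{\partial x}\cdot\xi+(\text{higher order in }Q)$ expresses each $\eta_j$ as a function of $\xi$, so over a fixed base point the fiber is cut out by $l$ equations expressing $\eta$ through $\xi$ and is therefore (set-theoretically) an affine space of dimension $n$ in the $\xi$ variables, of constant dimension $nN$ over the base. The cleanest route is to observe directly from \eqref{e:theequation} that $A$ is, as an $\cO_X$-module, free on the monomials in the $\xi_i$ once $\eta$ has been eliminated; equivalently, $A$ is a polynomial algebra over $\cO_X$ in the variables $\xi_1,\dots,\xi_n$ (localized at $v$). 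Since a polynomial (or localized polynomial) algebra is flat, this gives flatness of $\Gamma_r\to X$ at once.

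The main obstacle I anticipate is the elimination step in part (i): verifying carefully that \eqref{e:theequation} is the \emph{complete} set of relations, i.e. that no further relations among $x,y,\xi,\eta$ are imposed and that, conversely, \eqref{e:theequation} really does cut out the image scheme-theoretically (not merely set-theoretically). This requires checking that dividing by powers of $Q$ is reversible on the relevant locus, which is exactly where the inequality $v\ne 0$ enters, and that the adjugate manipulation $\hat C C=Q\cdot\mathrm{Id}$ does not lose information when $Q$ vanishes along $\Delta_X$ — one must confirm that passing from $C(x,y)\eta+Qu=0$ to \eqref{e:theequation} is an equivalence and not just an implication, which uses that $Q$ is a nonzerodivisor on $X$.
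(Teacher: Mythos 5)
Your part (ii) rests on a structural claim that is false, and this brings down the whole proposal. The equation \eqref{e:theequation} does \emph{not} express $\eta$ as a function of $\xi$: the term $u$ depends on $\eta$ as well. Expanding $f$ in a Taylor series, one finds that at a point $(x_0,y_0)$ with $Q(x_0,y_0)=0$ the polynomial $u(x_0,y_0,\xi,\eta)$ is a linear function of $\xi$ plus, when $r=2$, a quadratic form in $\eta$ (the degree-two term of $f$ in $y$ carries the factor $Q^{2(r-1)}$, which survives the division by $Q^r$ exactly when $r=2$). Consequently, for $r=2$ a fiber of $\Gamma'_r$ over a point of $\Delta_X$ is in general a double cover of the $\xi$-space rather than a copy of it; the paper's \S\ref{ss:the group scheme}(iv) even exhibits fibers isomorphic to $(\BZ/2\BZ)\times\BG_a^n$, which are disconnected and hence cannot be the spectrum of any localized polynomial algebra (a principal open subset of $\BA^n$ is connected). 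The fibers over $U$ give an even more basic counterexample: $\Gamma_r\times_X U=U\times U$, so those fibers are isomorphic to $U$ itself, not to $\BA^n$ (incidentally, the fibers here have dimension $n$, not $nN$). So the assertion that $A$ is a polynomial algebra over $\cO_X$ in $\xi_1,\dots,\xi_n$ localized at $v$ is false, and flatness cannot be obtained that way. The paper's actual mechanism is a dimension count plus miracle flatness: $\Gamma'_r$ is cut out of $X\times\BA^{n+l}$ --- which is flat over $X$ with Cohen--Macaulay fibers $\BA^{n+l}$ --- by exactly $l$ equations (and one open condition), so it suffices to prove that every fiber of $\Gamma'_r\to X$ has dimension $\le n$; over $\Delta_X$ this uses that $\hat C(x_0,y_0)$ has rank $\le 1$ (because $\det C(x_0,y_0)=Q(x_0,y_0)=0$) together with the shape of $u$ described above, giving \emph{at most two} values of $\eta$ for each $\xi$ --- not exactly one, which is precisely where your elimination claim breaks.

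In part (i) you correctly flag the key difficulty --- that \eqref{e:theequation} must cut out the image \emph{scheme-theoretically} --- but your proposed resolution (reversibility of the division by $Q$ and of the adjugate manipulation, using that $Q$ is a nonzerodivisor on $X$) cannot close it. Since $\cA_r$ is by construction a subalgebra of $j_*\cO_{U\times U}$, the surjection $\cO_{\Gamma'_r}\epi\cA_r$ you produce realizes $\Gamma_r$ as the schematic image of $U\times U$ in $\Gamma'_r$; hence $\Gamma_r=\Gamma'_r$ if and only if $U\times U$ is schematically dense in $\Gamma'_r$, i.e.\ $\Gamma'_r$ has no components or embedded components supported over $\Delta_X$. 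That $Q$ is a nonzerodivisor on $X$ says nothing about this; what is needed is that the pullback of $Q$ be a nonzerodivisor on $\Gamma'_r$, and that is exactly what flatness of $\Gamma'_r\to X$ provides. This is why the paper proves flatness of $\Gamma'_r\to X$ \emph{first} and deduces statement (i) from it; your proposal inverts this dependence, and since your part (ii) reasons about $A$ --- whose identification with $\cO_{\Gamma'_r}$ is the very thing part (i) is supposed to establish --- it is also circular. Finally, a smaller but real error: on $\Gamma_r$ the inequality $v\ne 0$ does \emph{not} ``record that $(\tilde x,\tilde y)\in U$''. By Lemma~\ref{l:easy}(ii), at points of $\Gamma_r$ lying over $\Delta_X$ one has $(\tilde x,\tilde y)\in\Delta_X$, so $Q(\tilde x,\tilde y)=0$ there, and yet $v\ne 0$. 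The correct argument is the symmetry of the construction of $\cA_r$: both $v=Q(\tilde x,\tilde y)/Q(x,y)$ and its reciprocal $Q(x,y)/Q(\tilde x,\tilde y)$ are polynomials in the generators $x,y,\tilde x,\tilde y,\xi,\eta,\tilde\xi,\tilde\eta$ of $\cA_r$, so $v$ is a unit of $\cA_r$, and only for this reason does \eqref{e:inequality} hold on all of $\Gamma_r$.
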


Note that $u=(u_1,\ldots ,u_l)$ is a vector function. Also note that $v(x,y,\xi ,\eta )$ and $u_j(x,y,\xi ,\eta)$ are polynomials (not merely rational functions).

\begin{proof}
By \eqref{e:xi}, we have
\begin{equation}   \label{e:tilde x_i}
\tilde x_i=x_i+Q(x,y)^r\xi_i\, , \quad y_j=y_j+Q(x,y)^{r-1}\eta_j\, .
\end{equation}
So formula \eqref{e:v} says that 
\begin{equation}  \label{e:2v}
v(x,y,\xi ,\eta )=Q(\tilde x,\tilde y)/Q(x, y).
\end{equation}
Thus $Q(\tilde x,\tilde y)/Q(x, y)$ is a regular function on $\Gamma_r$. By symmetry, 
$Q(x,y)/Q(\tilde x,\tilde y)$ is also a regular function on $\Gamma_r$. Therefore the inequality \eqref{e:inequality} holds on $\Gamma_r$. It is easy to check that the equality \eqref{e:theequation}  also holds on $\Gamma_r$ (use the definition of $u$ from the formulation of the lemma and the equalities
$f(\tilde x,\tilde y)=f(x,y)=0$).

The coordinate ring of $\Gamma_r$ is generated by $x_i,y_j,\xi_i,\eta_j,v^{-1}$; this is clear from \eqref{e:tilde x_i} and the formulas 
$$\tilde\xi_i=v^{-r}\xi_i,\quad \tilde\eta_j=v^{1-r}\eta_j,$$
which follow from \eqref{e:xi}-\eqref{e:tilde xi} and  \eqref{e:2v}. So the morphism \eqref{e:theembedding} identifies $\Gamma_r$ with a closed subscheme of the locally closed subscheme $\Gamma'_r\subset X\times\BA^{n+l}$ defined by \eqref{e:theequation} and \eqref{e:inequality}. On the other hand, $\Gamma_r\supset U\times U$. So to prove statement (i), it remains to show that $U\times U$ is schematically dense in $\Gamma'_r$. This follows from flatness of the morphism $\Gamma'_r\to X$, which we are going to prove.

Note that \eqref{e:theequation} is a system of $l$ equations for a point in $X\times\BA^{n+l}$, so it suffices to check that the fiber of $\Gamma'_r$ over any point 
$(x_0,y_0)\in X$ has dimension $\le n$. Since $\Gamma'_r\times_XU=U\times U$, we can assume that $(x_0,y_0)\not\in U$, which means that $Q(x_0,y_0)=0$. Under this condition, we have to show that the set of solutions to the equation 
\begin{equation}   \label{e:our equation}
\eta+\hat{C}(x_0,y_0)u(x_0,y_0,\xi ,\eta)=0
\end{equation}  
(which is a system of $l$ equations for $n+l$ unknowns) has dimension $\le n$. To see this, note that $\det C(x_0,y_0)=Q(x_0,y_0)=0$, so $\hat C(x_0,y_0)$ has rank $\le 1$. Also note that $u(x_0,y_0,\xi,\eta)$ is a sum of a quadratic form in $\eta$  and a function of $\xi$ (this is clear from the definition of $u$ given in the formulation of the lemma). These two facts imply that for any $\xi$ there are at most two values of $\eta$ satisfying \eqref{e:our equation} 
\end{proof}

\subsubsection{The composition law in $\Gamma_r$}   \label{sss:composition law}
We have described $\Gamma_r$ as a subscheme of $X\times\BA^{n+l}$. In these terms, one can write an explicit formula for the composition map
\begin{equation}  \label{e:comp law}
\Gamma_r\times_X\Gamma_r\to \Gamma_r\, .
\end{equation}
A point of $\Gamma_r\times_X\Gamma_r$ is a collection $(x,y,\xi ,\eta ,\tilde\xi ,\tilde\eta )$, where $(x,y)\in X$, $\xi$ and $\eta$ satisfy conditions 
\eqref{e:theequation}-\eqref{e:inequality}, and $\tilde\xi$, $\tilde\eta$ satisfy similar conditions 
\[
\tilde\eta+\hat{C}(\tilde x,\tilde y)u(\tilde x,\tilde y,\tilde \xi ,\tilde \eta)=0, \quad v(\tilde x,\tilde y,\tilde \xi ,\tilde \eta )\ne 0;
\]
here $\tilde x,\tilde y$ are given by \eqref{e:tilde x_i}. It is straightforward to check that the map \eqref{e:comp law} is as follows:
\begin{equation}   \label{e:2composition law}
(x,y,\xi ,\eta ,\tilde\xi ,\tilde\eta )\mapsto (x,y, \xi+v(x,y,\xi ,\eta )^r\tilde\xi,\eta+v(x,y,\xi ,\eta )^{r-1}\tilde\eta),
\end{equation}
where $v(x,y,\xi ,\eta )$ is defined by \eqref{e:v}.

\subsection{Smoothness and the group schemes $(\Gamma_r)_z\,$, $z\in\Delta_X$}  \label{ss:smoothness}
\begin{prop}  \label{p:smoothness}
(i) The groupoid $\Gamma_r$ is smooth.

(ii) For $z\in\Delta_X$ the group scheme $(\Gamma_r)_z:=\Gamma_r\times_Xz$ is as described in \S\ref{ss:the group scheme}.
\end{prop}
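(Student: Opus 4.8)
The plan is to reduce both assertions to an explicit computation of the geometric fibers of $p_1\colon\Gamma_r\to X$ over the points of $\Delta_X$, and to deduce smoothness from this together with the flatness already established in Lemma~\ref{l:theembedding}(ii). By Lemma~\ref{l:easy}(iii)--(iv) both statements are \'etale-local on $X$ and $Y$, so I may work in the situation of \S\ref{sss:particular} and use the description $\Gamma_r=\Gamma'_r\subset X\times\BA^{n+l}$ of Lemma~\ref{l:theembedding}(i). Over $U$ the fibers of $p_1$ are copies of $U$ (Lemma~\ref{l:easy}(i)), hence smooth of dimension $n$, so only the fiber over a point $z=(x_0,y_0)$ with $Q(x_0,y_0)=0$ remains. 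Once I show this fiber is one of the group schemes listed in \S\ref{ss:the group scheme} --- all of which are smooth of dimension $n$ --- statement (ii) is proved, and statement (i) follows because a morphism locally of finite presentation that is flat with geometrically regular fibers is smooth (and $p_2$ is then smooth via the inversion $i$). Since all computations below are valid over the residue field of $z$ and its extensions, passing to geometric fibers causes no trouble.

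First I would evaluate the polynomials $u$ and $v$ of Lemma~\ref{l:theembedding} at $Q(x_0,y_0)=0$. Expanding $f$ and $Q$ in the increments $(Q^r\xi,Q^{r-1}\eta)$ and dividing by the relevant power of $Q$, every term beyond the lowest carries a positive power of $Q$; setting $Q=0$ leaves $u(x_0,y_0,\xi,\eta)=A\xi$ and $v\equiv 1$ when $r\ge 3$, and $u=A\xi+H(\eta)$, $v=1+b\eta$ when $r=2$, where $A:=\tfrac{\partial f}{\partial x}(x_0,y_0)$, $H(\eta)$ is the vector of quadratic forms $\tfrac12\tfrac{\partial^2 f}{\partial y^2}(x_0,y_0)\eta^2$, and $b:=\tfrac{\partial Q}{\partial y}(x_0,y_0)$. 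For $r\ge 3$ the fiber equation \eqref{e:our equation} becomes $\eta=-\hat C A\xi$ with $v\equiv 1$, so the fiber is the graph of a linear map, i.e.\ $\simeq\BG_a^n$, with additive group law by \eqref{e:2composition law}; this settles case (i) for $r\ge 3$.

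The crux is $r=2$. Here $\det C(x_0,y_0)=0$, so $\hat C$ has rank $\le 1$; if $\dim\ker C\ge 2$ then $\hat C=0$, the equation forces $\eta=0$ and $v=1$, giving $\BG_a^n$, so I may assume $\dim\ker C=1$ and write $\hat C=e\,\sigma^{T}$ with $\ker C=\langle e\rangle$ and $\sigma^{T}C=0$. Then \eqref{e:our equation} forces $\eta=s\,e$ for a scalar $s$ and reduces to the \emph{scalar quadratic}
\begin{equation*}
\kappa s^{2}+s+\beta\xi=0,\qquad \kappa:=\sigma^{T}H(e),\ \ \beta:=\sigma^{T}A,
\end{equation*}
while the inequality becomes $v=1+\mu s\ne 0$ with $\mu:=b\,e$. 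The key computation --- and the main obstacle --- is the identity $\mu=2\kappa$, obtained by differentiating $Q=\det\tfrac{\partial f}{\partial y}$ via Jacobi's formula $\partial_{y_k}Q=\Tr(\hat C\,\partial_{y_k}\tfrac{\partial f}{\partial y})$ and contracting against $e$ using $\hat C=e\sigma^{T}$; this is exactly what makes the group law close on the fiber and produces the $\lambda^{2}$-weight below. I would also record the dictionary: $m=2$ is equivalent to $\kappa\ne 0$ (along $e$ the fiber $X\times_Y z$ has length $2$ iff $\sigma^{T}H(e)\ne 0$), while $z$ is a nonsingular point of $X$ iff $[A\,|\,C]$ has rank $l$, i.e.\ iff $\beta\ne 0$.

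Finally I would read off the four cases from the scalar equation, the inequality $v=1+2\kappa s\ne 0$, and the composition law \eqref{e:2composition law}, which on the fiber reads $(\xi,s)\cdot(\tilde\xi,\tilde s)=(\xi+v^{2}\tilde\xi,\ s+v\tilde s)$. If $m\ge 3$ (equivalently $\kappa=0$) the equation is linear and $v\equiv 1$, giving $\BG_a^n$, so case (i) holds in all remaining instances. If $m=2$ and $\beta\ne 0$: absorbing $\beta\xi$ into a coordinate $\xi_1$ leaves $s$ and $\xi_2,\dots,\xi_n$ free, and substituting $v=1+2\kappa s$ turns the $s$-law into $v''=v\tilde v$, so in characteristic $\ne 2$ the coordinate $v$ is multiplicative and ranges over $\BG_m$ (it omits $0$ precisely where $s$ omits $-1/2\kappa$), while $\xi_2,\dots,\xi_n$ transform by $v^{2}$; this is $\BG_m\ltimes\BG_a^{n-1}$ with $\lambda$ acting by $\lambda^2$, case (ii). In characteristic $2$ one has $v\equiv 1$, no point is removed, and the law is additive, giving $\BG_a^n$, case (iii). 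If $m=2$ and $\beta=0$ the equation degenerates to $s(\kappa s+1)=0$, so $s\in\{0,-1/\kappa\}$ with $\xi\in\BA^n$ free; since $v^{2}=1$ at both values the group is the direct product $(\BZ/2\BZ)\times\BG_a^n$, case (iv). In every case the fiber is a smooth group scheme of dimension $n$, completing (ii) and hence (i).
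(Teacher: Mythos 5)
Your proof is correct, and its skeleton is the same as the paper's: deduce smoothness from the flatness of Lemma~\ref{l:theembedding}(ii) plus smoothness of the fibers, localize to the setting of \S\ref{sss:particular}, and compute $(\Gamma_r)_z$ explicitly from the fiber equation \eqref{e:our equation} and the composition law \eqref{e:2composition law}. The genuine divergence is in how the fiber computation is organized. The paper first changes the presentation so that $l$ equals the dimension of the tangent space of $\Fib_z$ at $z$; this forces $C(x_0,y_0)=0$, so that either $\hat C(x_0,y_0)=0$ (when $l>1$, which corresponds to $m\ge 3$) or $l=1$, in which case everything reduces to the one-variable expansion $f(x_0,y_0+\eta)=f(x_0,y_0)+a\eta^2+\ldots$ and the identity $v=1+2a\eta$ is immediate. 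You instead keep the presentation fixed, split on $\dim\Ker C(x_0,y_0)$, and in the essential case $\dim\Ker C=1$ exploit the rank-one structure $\hat C=e\,\sigma^T$ together with Jacobi's formula to prove $\mu=2\kappa$; that identity is exactly the general form of the paper's $v=1+2a\eta$, and it is indeed what makes the law close up on the fiber and produces the $\lambda^2$-weight in case (ii). What each approach buys: the paper's normalization makes the computation nearly trivial, but it tacitly relies on $\Gamma_r$ being independent of the chosen presentation (this is where the intrinsic definition of \S\ref{sss:separated case} and Lemma~\ref{l:easy} enter); your route is heavier on linear algebra but self-contained for the given embedding, and your dictionary ($m=2\Leftrightarrow\kappa\ne 0$ when $\dim\Ker C=1$, and $z$ nonsingular $\Leftrightarrow\beta\ne 0$) is a correct translation of the conditions in \S\ref{ss:the group scheme}. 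A small bonus of your version: the formula $u(x_0,y_0,\xi,\eta)=A\xi$ for $r\ge 3$ is more precise than the paper's claim that $u(x_0,y_0,\xi,\eta)=0$ there (which fails when $\tfrac{\partial f}{\partial x}(x_0,y_0)\neq 0$); the conclusion is unaffected, since the fiber is then the graph $\eta=-\hat C A\xi$ with additive law, hence still isomorphic to $\BG_a^n$.
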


\begin{proof}
By Lemma~\ref{l:theembedding}(ii), $\Gamma_r$ is flat over $X$. So it suffices to prove statement (ii).

Let $z\in\Delta_X$. Let $\Fib_z:=X\times_Yz$ be the corresponding fiber\footnote{If $z$ is a $k$-point we can write this fiber as $\varphi^{-1}(\varphi (z))$.} of $\varphi :X\to Y$. Since the question is local, we can assume that $X$ and $Y$ are as in \S\ref{ss:flatness}. Moreover, we can assume that $l$ (i.e., the number of the variables $y_j$) is equal to the dimension of the tangent space of $\Fib_z$ at $z$. This means that $C(x_0,y_0)=0$, where $C$ is the matrix 
$\frac{\partial f}{\partial y}$ and $(x_0,y_0)=z$.

By Lemma~\ref{l:theembedding} and formula~\eqref{e:2composition law}, $(\Gamma_r)_z$ is the subscheme of $\BA^{n+l}$ defined by the conditions
\begin{equation}   \label{e:1group}
\eta+\hat{C}(x_0,y_0)u(x_0,y_0,\xi ,\eta)=0, \quad v(x_0,y_0,\xi ,\eta )\ne 0
\end{equation}
and equipped with the group operation
\begin{equation}    \label{e:2group}
(\tilde\xi ,\tilde\eta )\cdot (\xi ,\eta ) = (\xi+v(x_0,y_0,\xi ,\eta )^r\tilde\xi,\eta+v(x_0,y_0,\xi ,\eta )^{r-1}\tilde\eta),
\end{equation}
where $u$ and $v$ are as in the formulation of Lemma~\ref{l:theembedding}. Since $C(x_0,y_0)=0$ we see that $\hat{C}(x_0,y_0)=0$ if $l>1$ and 
$\hat{C}(x_0,y_0)=1$ if $l=1$.

If $r\ge 3$ then it is easy to see that $u(x_0,y_0,\xi ,\eta)=0$, $v(x_0,y_0,\xi ,\eta)=1$, so $(\Gamma_r)_z\simeq\BG_a^n$.

Now suppose that $r=2$. If $l>1$ 
then $\hat{C}(x_0,y_0)=0$, and it is easy to check\footnote{Use formula \eqref{e:v} and note that $Q$ has zero differential at $(x_0, y_0)$ because $Q=\det C$, $C(x_0,y_0)=0$, and $l>1$.} that $v(x_0,y_0,\xi ,\eta)=1$, so $(\Gamma_r)_z\simeq\BG_a^n$. This agrees with \S\ref{ss:the group scheme}(i): indeed, $l$ is the dimension of the tangent space of $\Fib_z$ at $z$, so if $l>1$ then the multiplicity of $z$ in $\Fib_z$ is greater than $2$.

Now let $r=2$, $l=1$. Recall that $\frac{\partial f}{\partial y}$ vanishes at $(x_0,y_0)$, so the Taylor expansion of $f(x_0,y_0+\eta )$ looks as follows:
\[
f(x_0,y_0+\eta )=f(x_0,y_0)+a\eta^2+\ldots
\] 
It is easy to check that 
\[
u(x_0,y_0,\xi ,\eta)=a\eta^2+\frac{\partial f}{\partial x}(x_0,y_0)\cdot\xi ,\quad v(x_0,y_0,\xi ,\eta)=1+2a\eta ,
\]
so $(\Gamma_r)_z$ is the subscheme of $\BA^{n+1}$ defined by the conditions
\begin{equation} 
\eta+a\eta^2+ \frac{\partial f}{\partial x}(x_0,y_0)\cdot\xi=0 , \quad 1+2a\eta \ne 0
\end{equation}
and equipped with the group operation
\begin{equation}    
(\tilde\xi ,\tilde\eta )\cdot (\xi ,\eta ) = (\xi+(1+2a\eta)^2\tilde\xi,\eta+\tilde\eta+2a\eta\tilde\eta).
\end{equation}
If $a=0$ this group scheme is isomorphic to $\BG_a^n$, which agrees with \S\ref{ss:the group scheme}(i). If $a\ne 0$ the group scheme $(\Gamma_r)_z$ depends on whether  $\frac{\partial f}{\partial x}(x_0,y_0)=0$ (i.e., on whether $z$ is a singular point of $X$), and it is straightforward to check that $(\Gamma_r)_z$ 
is as described in~\S\ref{ss:the group scheme}(ii-iv).
\end{proof}

\subsection{Verifying the claim of \S\ref{sss:thegoal}(iii)}
For a $k$-algebra $A$, let $F(A)$ be the set of triples $(I,\bar x, \bar y)$, where $I\subset A$ is an ideal, $\bar x\in (A/I^r)^n$, $\bar y\in (A/I^{r-1})^l$. Consider the map
\begin{equation}  \label{e:XtoF}
X(A)\to F(A)
\end{equation}
that takes $(x,y)\in X(A)\subset A^n\times A^l$ to $(I,\bar x, \bar y)$, where $I$ is the ideal generated by $Q(x,y)$, $\bar x\in (A/I^r)^n$ is the image of $x\in A^n$, and $\bar y\in (A/I^{r-1})^l$ is the image of $y\in A^l$. It is easy to check that the map \eqref{e:XtoF} factors through the quotient set 
$X(A)/\Gamma_r (A)$. Since $F$ is an fppf sheaf, we get a map $$(X/\Gamma_r)(A)\to F(A).$$

Applying the above construction to $A=R[t]$, one gets a morphism 
\begin{equation}  \label{e:what we need}
\Maps^{\circ}_N (\BA^1,X/\Gamma_r )\to Z_r,
\end{equation}
where $\Maps^{\circ}_N (\BA^1,X/\Gamma_r )$ is as in \S\ref{sss:Delta} and $Z_r$ is as in \S\ref{sss:Z as lim}. Moreover, the morphism \eqref{e:what we need} factors through $Z'_r:=\im (Z_{r+1}\to Z_r)$ (recall that $Z'_r$ is an open subscheme of $Z_r$ and if $r\ge 3$ then $Z'_r=Z_r$). It is easy to check that the morphism \eqref{e:what we need} is a monomorphism.

It remains to prove that if $r\ge 3$ then for every triple $(q,\bar x,\bar y)\in Z_r(R)$ there exists a faithfully flat finitely presented $R[t]$-algebra $A$ such that the triple $(qA, \bar x, \bar y)\in F(A)$ belongs to the image of the map \eqref{e:XtoF}. Choose $x_0\in R[t]^n$, $y_0\in R[t]^l$ mapping to $\bar{x}\in (R[t]/(q^r))^n$, $\bar{y}\in (R[t]/(q^{r-1}))^l$.
It suffices to find a flat finitely presented morphism $\Spec A'\to\Spec R[t]$ with non-empty fibers over points of $\Spec R[t]/(q)$ and  an element $\eta\in (A')^l$ such that
\begin{equation}   \label{e:17}
f(x_0,y_0+q^{r-1}\eta)=0,
\end{equation}
\begin{equation}    \label{e:18}
q^{-1}Q(x_0,y_0+q^{r-1}\eta)\in (A')^\times .
\end{equation}
By \S\ref{sss:Z as lim}(6) and the assumption $r\ge 3$, the element $q^{-1}Q(x_0,y_0)$ is invertible modulo~$q$. So \eqref{e:17} is the only essential condition for $\eta$: one can always achieve \eqref{e:18} by modifying $A'$ slightly.

Let $C_0:= \frac{\partial f}{\partial y}  (x_0,y_0)$ and let $\hat C_0$ be the adjugate matrix, so $\hat C_0 C_0=Q(x_0,y_0)$. 
Write 
\[f(x_0,y_0+t\eta)=f(x_0,y_0)+tC_0\eta+t^2g(t,\eta), 
\] 
and then rewrite \eqref{e:17} as
\begin{equation}   \label{e:19}
q^{-1}Q(x_0,y_0)\eta+q^{-r}\hat C_0 f(x_0,y_0)+q^{r-2}\hat C_0 g(q^r,\eta)=0,
\end{equation}
Note that $q^{-1}Q(x_0,y_0)\in A$ and $q^{-r}\hat C_0 f(x_0,y_0)\in A^l$ by conditions (4)-(5) from \S\ref{sss:Z as lim}.

Let $W$ be the $A$-scheme whose $A'$-points are solutions to \eqref{e:19}. 
It suffices to show that the fiber of $W$ over any point of $\Spec A/(q)$ is non-empty and the morphism $W\to\Spec A$ is flat at each point of $W$ where $q$ vanishes. Since \eqref{e:19} is a system of $l$ equations for $l$ unknowns, it is enough to show that the fiber of $W$ over any point of $\Spec A/(q)$ is finite and non-empty. In fact, it has exactly one point because $q^{-1}Q(x_0,y_0)$ is invertible modulo $q$ and $r-2>0$.

\subsection{The Lie algebroid of $\Gamma_r\,$}
Let $r\ge 2$. As already mentioned in \S\ref{sss:example of Lie algebroid}, we have the Lie subalgebroid $\fa_r:=(\varphi^*\Theta_Y) (-r\Delta_X)\subset \Theta_X$. Using the definition of the anchor map $\tau :\Lie(\Gamma_r )\to\Theta_X$ given in \S\ref{sss:algebroid of groupoid}, one checks that $\Ker\tau=0$ and $\im\tau=\fa_r$ (e.g., one can use Lemma~\ref{l:theembedding}). Therefore $\tau$ induces an isomorphism of Lie algebroids $\Lie (\Gamma_r )\iso\fa_r$.

\end{document}